\newtheorem{theorem}{Theorem}%[section]
\newtheorem{proposition}[theorem]{Proposition}
\newtheorem{lemma}[theorem]{Lemma}
\newtheorem{corollary}[theorem]{Corollary}
\newtheorem{problem}{Problem}
\newtheorem*{definition}{Definition}
\def\Z{\mathbb{Z}}
\def\A{\mathcal{A}}
\def\G{\mathcal{G}}
\def\P{\mathcal{P}}
\def\AA{\mathcal{A}^{(2)}}
\def\PP{\mathcal{P}^{(2)}}
\def\GG{\mathcal{G}^{(2)}}
\def\D{\mathcal{D}}
\def\O{\mathcal{O}}
\def\Q{\mathcal{Q}}
\def\H{\mathcal{H}}
\def\M{\mathcal{M}^{(2)}}
\def\fn2{\lfloor \frac{n}{2} \rfloor}
\def\cn2{\lceil \frac{n}{2} \rceil}
\DeclareMathOperator{\sh}{sh}
\def\tP{\widetilde{P}}
\def\tQ{\widetilde{Q}}
\def\R{\mathcal{R}}
\def\bij{\varphi}
\def\bijMG{\psi}
\def\abs[#1]{|#1|}
\def\bijw{\tilde\varphi}
\def\bijMGw{\tilde\psi}
\begin{document}

\title{Bijections for pairs of non-crossing lattice paths\\ and walks in the plane}

\author{Sergi Elizalde\\
Department of Mathematics\\
Dartmouth College \\
Hanover, NH 03755 \\
\texttt{sergi.elizalde@dartmouth.edu}}

\date{}

\maketitle

\begin{abstract}
It is a classical result in combinatorics that among lattice paths with $2m$ steps $U=(1,1)$ and $D=(1,-1)$ starting at the origin, the number of those that do not go below the $x$-axis %(with no restriction on the endpoint)
equals the number of those that end on the $x$-axis. %at $(2m,0)$% (but are allowed to go below the $x$-axis)
A much more unfamiliar fact is that the analogous equality obtained by replacing single paths with $k$-tuples of non-crossing paths holds for every $k$. This result has appeared in the literature in different contexts involving plane partitions (where it was proved by Proctor), partially ordered sets, Young tableaux, and lattice walks, but no bijective proof for $k\ge2$ seems to be known.

In this paper we give a bijective proof of the equality for $k=2$, showing that for pairs of non-crossing lattice paths with $2m$ steps $U$ and $D$, the number of those that do not go below the $x$-axis equals the number of those that end on the $x$-axis.
%that the analogous result holds when considering pairs of non-crossing paths. 
Translated in terms of walks in the plane starting at the origin with $2m$ unit steps in the four coordinate directions, our work provides correspondences among those constrained to the first octant, those constrained to the first quadrant that end on the $x$-axis, and those in the upper half-plane that end at the origin. 

Our bijections, which are defined in more generality, also prove new results where different endpoints are allowed, and they give a bijective proof of the formula for the number of walks in the first octant that end on the diagonal, partially answering a question of Bousquet-M\'elou and Mishna.
\end{abstract}

%MSC: 05A19 (primary); 05A15, 82B41, 05A10 (secondary)

\section{Introduction}

For the purpose of this article, a {\em (lattice) path} is a path in $\Z^2$ with steps $U=(1,1)$ and $D=(1,-1)$ starting at the origin $(0,0)$. The {\em length} of a path is its number of steps, which we will denote by $n$.
Let $\A_n$ be the set of all lattice paths of length $n$, and note that $|\A_n|=2^n$.

A {\em Dyck path} is a lattice path that does not go below the $x$-axis and ends on the $x$-axis. Denote the set of Dyck paths of length $2m$ by $\D_{2m}$. It is well known that $|\D_{2m}|=C_m=\frac{1}{m+1}\binom{2m}{m}$, the $m$th Catalan number.

A {\em Grand Dyck path} of length $n$ is a lattice path that ends at $(n,0)$ (for even $n$) or at $(n,1)$ (for odd $n$). Denote the set of Grand Dyck paths (sometimes called {\em free} Dyck paths) of length $n$ by $\G_n$.
It is easy to see that $|\G_n|=\binom{n}{\fn2}$, since constructing a Grand Dyck path is equivalent to choosing which $\fn2$ among the $n$ steps of the path are down-steps.

A {\em Dyck path prefix} is a lattice path that does not go below the $x$-axis, but can end at any height. 
Denote the set of Dyck path prefixes (sometimes called {\em ballot} paths) of length $n$ by $\P_n$.
Note that, by definition, $\P_{2m}\cap\G_{2m}=\D_{2m}$.
Counting Dyck path prefixes is a not as straightforward as counting Grand Dyck paths, but there are several ways to show that $|\P_n|=\binom{n}{\fn2}$. One such way is to provide a bijection between $\P_n$ and $\G_n$. 
Next we describe two known bijections between these sets.

The first one, which we denote by $\xi$, belongs to mathematical folklore and has been used in slightly different forms in \cite{GreKle,EliRub,BBES}. The crucial idea is the construction of a matching between $U$s and $D$s that face each other in the path, in the sense that their midpoints are at the same height and the horizontal line segment (called a {\em tunnel} in~\cite{Eli}) joining them stays below the path. Thinking of the $U$s as opening parentheses and the $D$s as closing parentheses, the matched parentheses properly close each other. Such a matching exists for every lattice path, and it is unique, although in general not all the steps are matched. This matching will play an important role in our bijections in Section~\ref{sec:bij}. Note that among the unmatched steps of the path, the $D$ steps are always to the left of the $U$ steps. Otherwise, if a $U$ came before a $D$, then the higher one of these two steps (or both if they are at the same height) would have been matched.

Given $P\in\P_n$, in order to define $\xi(P)$, we start by matching $U$s and $D$s that face each other in $P$ as described above. Figure~\ref{fig:xi} shows an example. Since $P\in\P_n$, all $D$ steps are matched, and so the only possibly unmatched steps are $U$ steps. Let $j$ be the number of unmatched steps, which also equals the ending height ($y$-coordinate) of $P$, and note that $j$ and $n$ have the same parity. Let $\xi(P)$ be the path obtained by changing the leftmost $\lfloor\frac{j}{2}\rfloor$ unmatched $U$ steps of $P$ into $D$ steps.

It is clear that $\xi(P)\in\G_n$, since this path has $\fn2$ $D$ steps and $\cn2$ $U$ steps. Note also that the pairs of steps that face each other in $P$ are precisely the same pairs of steps that face each other in $\xi(P)$. This observation allows us to find the inverse map, showing that $\xi$ is a bijection. Indeed, given $Q\in\G_n$, we again start by matching $U$s and $D$s that face each other in $Q$, and note that the unmatched $D$s precede the unmatched $U$s.
Changing all the unmatched $D$s into $U$s  we obtain $\xi^{-1}(Q)$.

\begin{figure}[htb]
  \begin{center}
    \begin{tikzpicture}[scale=0.5]
    \def\U{(1,1)}     \def\D{(1,-1)}
      \draw (0,0) coordinate(d0)
      -- ++\U coordinate(d1)
      -- ++\U coordinate(d2)
      -- ++\D coordinate(d3)
      -- ++\D coordinate(d4)
      -- ++\U coordinate(d5)
      -- ++\U coordinate(d6)
      -- ++\D  coordinate(d7)
      -- ++\U coordinate(d8)
      -- ++\U coordinate(d9)
      -- ++\D  coordinate(d10)
      -- ++\D  coordinate(d11)
      -- ++\U coordinate(d12)
      -- ++\U coordinate(d13)
      -- ++\U coordinate(d14)
      -- ++\D  coordinate(d15)
      -- ++\U coordinate(d16);
      \draw[densely dotted] (0,0)--(16,0);
      \draw[dotted,orange] (.5,.5)--(3.5,.5);
      \draw[dotted,orange] (1.5,1.5)--(2.5,1.5);
      \draw[dotted,orange] (5.5,1.5)--(6.5,1.5);
      \draw[dotted,orange] (7.5,1.5)--(10.5,1.5);
      \draw[dotted,orange] (8.5,2.5)--(9.5,2.5);
      \draw[dotted,orange] (13.5,3.5)--(14.5,3.5);
      \draw[green,ultra thick] (d4)--(d5);
      \draw[green,ultra thick] (d11)--(d12);
      \draw[purple,thick] (d12)--(d13);
      \draw[purple,thick] (d15)--(d16);
      \foreach \x in {0,...,16} {
        \draw (d\x) circle (1.5pt);
      }
      \draw (8,-1.5) node [label=$\xi$,rotate=-90] {$\mapsto$};
       \draw (0,-5) coordinate(d0)
      -- ++\U coordinate(d1)
      -- ++\U coordinate(d2)
      -- ++\D coordinate(d3)
      -- ++\D coordinate(d4)
      -- ++\D coordinate(d5) %
      -- ++\U coordinate(d6)
      -- ++\D  coordinate(d7)
      -- ++\U coordinate(d8)
      -- ++\U coordinate(d9)
      -- ++\D  coordinate(d10)
      -- ++\D  coordinate(d11)
      -- ++\D coordinate(d12) %
      -- ++\U coordinate(d13)
      -- ++\U coordinate(d14)
      -- ++\D  coordinate(d15)
      -- ++\U coordinate(d16);
      \draw[densely dotted] (0,-5)--(16,-5);
      \draw[dotted,orange] (.5,-4.5)--(3.5,-4.5);
      \draw[dotted,orange] (1.5,-3.5)--(2.5,-3.5);
      \draw[dotted,orange] (5.5,-5.5)--(6.5,-5.5);
      \draw[dotted,orange] (7.5,-5.5)--(10.5,-5.5);
      \draw[dotted,orange] (8.5,-4.5)--(9.5,-4.5);
      \draw[dotted,orange] (13.5,-5.5)--(14.5,-5.5);
      \draw[green,ultra thick] (d4)--(d5);
      \draw[green,ultra thick] (d11)--(d12);
      \draw[purple,thick] (d12)--(d13);
      \draw[purple,thick] (d15)--(d16);
      \foreach \x in {0,...,16} {
        \draw (d\x) circle (1.5pt);
      }
    \end{tikzpicture}
  \end{center}
  \caption{The bijection $\xi:\P_n\to\G_n$. The unmatched steps changed by $\xi$ are thicker and green.}\label{fig:xi}
\end{figure}
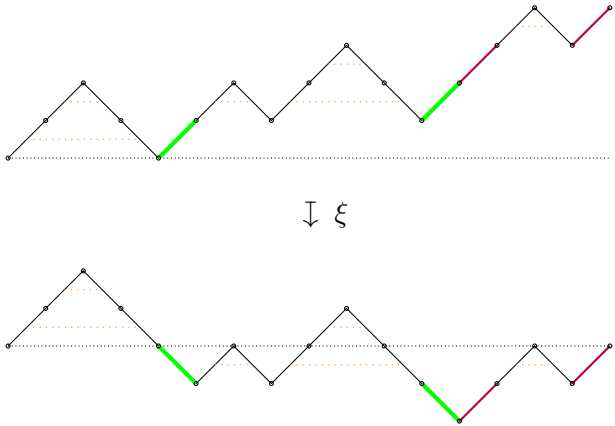

A second bijection $\nu$ between $\P_n$ and $\G_n$, which we will not use in this paper, is due to Nelson~\cite[p.67]{Feller}, and it is described in \cite{Callan}.
Given $P\in\P_n$, let $h$ be its ending height, and let $A$ be the last point of $P$ at height $\lfloor\frac{h}{2}\rfloor$. We construct a Grand Dyck path by splitting $P$ at point $A$, reflecting the right piece along a vertical axis (equivalently, reading the word from right to left and switching $U$s and $D$s) and reattaching it to the left 
of the left piece (see Figure~\ref{fig:nu}). The inverse map is obtained by splitting the Grand Dyck path at its leftmost lowest point, reflecting the left piece, and reattaching it at the right end of the path.

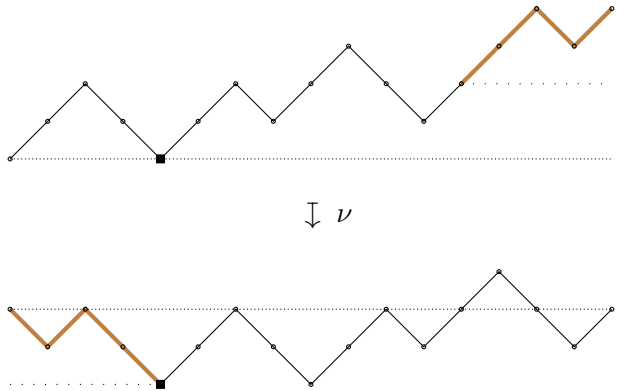
\begin{figure}[htb]
  \begin{center}
    \begin{tikzpicture}[scale=0.5]
    \def\U{(1,1)}     \def\D{(1,-1)}
      \draw (0,0) coordinate(d0)
      -- ++\U coordinate(d1)
      -- ++\U coordinate(d2)
      -- ++\D coordinate(d3)
      -- ++\D coordinate(d4)
      -- ++\U coordinate(d5)
      -- ++\U coordinate(d6)
      -- ++\D  coordinate(d7)
      -- ++\U coordinate(d8)
      -- ++\U coordinate(d9)
      -- ++\D  coordinate(d10)
      -- ++\D  coordinate(d11)
      -- ++\U coordinate(d12)
      -- ++\U coordinate(d13)
      -- ++\U coordinate(d14)
      -- ++\D  coordinate(d15)
      -- ++\U coordinate(d16);
	  \draw[brown,ultra thick] (d12)--(d13)--(d14)--(d15)--(d16);      
      \foreach \x in {0,...,16} {
        \draw (d\x) circle (1.5pt);
      }
      \draw[densely dotted] (0,0)--(16,0);
      \draw[loosely dotted] (d12)--(16,2);
      \filldraw (d4) ++(-3pt,-3pt) rectangle ++(6pt,6pt) ;
      \draw (8,-1.5) node [label=$\nu$,rotate=-90] {$\mapsto$};
 	  \draw (0,-4) coordinate(d0)
      -- ++\D coordinate(d1)
      -- ++\U coordinate(d2)
      -- ++\D coordinate(d3)
      -- ++\D coordinate(d4)
      -- ++\U coordinate(d5)
      -- ++\U coordinate(d6)
      -- ++\D  coordinate(d7)
      -- ++\D coordinate(d8)
      -- ++\U coordinate(d9)
      -- ++\U  coordinate(d10)
      -- ++\D  coordinate(d11)
      -- ++\U coordinate(d12)
      -- ++\U coordinate(d13)
      -- ++\D coordinate(d14)
      -- ++\D  coordinate(d15)
      -- ++\U coordinate(d16);
      \draw[brown,ultra thick] (d0)--(d1)--(d2)--(d3)--(d4);
      \foreach \x in {0,...,16} {
        \draw (d\x) circle (1.5pt);
      }
      \draw[densely dotted] (0,-4)--(16,-4);
      \draw[loosely dotted] (0,-6)--(d4);
      \filldraw (d4) ++(-3pt,-3pt) rectangle ++(6pt,6pt) ;
    \end{tikzpicture}
  \end{center}
  \caption{The bijection $\nu:\P_n\to\G_n$. The splitting point has a square mark, and the piece of the path that is flipped and moved is brown and thicker.}\label{fig:nu}
\end{figure}

\medskip

We write the steps of a lattice path $P$ of length $n$ as $p_1p_2\dots p_n$, where $p_l\in\{U,D\}$ for all $l$. For $0\le a\le n$, the {\em height} of $P$ at $x=a$, denoted $h_a(P)$, is its $y$-coordinate at that point. We denote by $h(P)=h_n(P)$ the ending height of $P$. We write $P\ge0$ to mean that $h_a(P)\ge0$ for all $a$, that is, $P$ 
does not go below the $x$-axis. We denote by $-P$ the path obtained by reflecting $P$ along the $x$-axis.

In this paper we are interested in pairs $(P,Q)$ of lattice paths of the same length where, at every step, $Q$ is weakly below $P$, that is, $h_a(Q)\le h_a(P)$ for all $a$. We say that $(P,Q)$ is a pair of {\em nested} (or {\em non-crossing}) lattice paths, and 
we write $Q\le P$ to denote that $Q$ is weakly below $P$. It is clear that $\le$ defines a partial order.
More generally, we say that $(P_1,\dots,P_k)$ is a $k$-tuple of nested  (or {non-crossing}) lattice paths if $P_{i+1}\le P_i$ for $1\le i\le k-1$.
Denote by $\A^{(k)}_n$  the set of $k$-tuples of nested lattice paths of length~$n$. Similarly, denote by $\G^{(k)}_n$ (resp. $\P^{(k)}_n$) the set of $k$-tuples of nested Grand Dyck paths (resp. Dyck path prefixes) of length~$n$. Note that $\A^{(1)}_n=\A_n$, $\G^{(1)}_n=\G_n$, and $\P^{(1)}_n=\P_n$.

The cardinality of $\G^{(k)}_n$ can be found 
by applying the Gessel-Viennot method~\cite{GV} to count tuples of non-intersecting paths with given endpoints, or
by relating these tuples of paths to plane partitions~\cite{GOV,MacMahon}, as described in Section~\ref{sec:related}. These methods give the known formulas
\begin{equation}\label{eq:Gk}
|\G^{(k)}_n|=\det\left(\binom{n}{\fn2-i+j}\right)_{i,j=1}^k
=\prod_{i=1}^{\cn2}\prod_{j=1}^{\fn2}\prod_{l=1}^k \frac{i+j+l-1}{i+j+l-2}.
\end{equation}

Enumerating $\P^{(k)}_n$ is significantly harder. As described in Section~\ref{sec:related}, sophisticated 
representation-theoretic arguments can be used to show that 
\begin{equation}\label{eq:PG}
|\P^{(k)}_n|=|\G^{(k)}_n|
\end{equation}
for every $k\ge1$. Aside from the case $k=1$ described above, no bijective proof of this equality seems to be known.
Equation~\eqref{eq:PG} may be surprising considering that $(\P_n,\le)$ and $(\G_n,\le)$ are not isomorphic as partially ordered sets (already for $n=4$), that is, there is no bijection between $\P_n$ and $\G_n$ that respects the order relation  of a path being weakly below another. 

In this paper we present a natural bijection between $\PP_n$ and $\GG_n$, described in Section~\ref{sec:bij} in terms of paths.
This map is reminiscent of the bijection $\xi$ between  $\P_n$ and $\G_n$ described above. Our bijection is presented in more generality, allowing different endpoints for the paths.
In Section~\ref{sec:walks} we translate the bijection in terms of walks in the plane with unit steps in the four coordinate directions, constrained to lie in different regions and with restricted endpoints. 
In Section~\ref{sec:related} we discus related work in the literature involving plane partitions, partially ordered sets, tableaux, paths, and walks. We use our set up to provide 
bijective proofs of some known results. 
Section~\ref{sec:proofs} contains the proof of the fact that main map defined in Section~\ref{sec:bij} is bijective.
Finally, we mention a few open problems and future directions in Section~\ref{sec:open}.

\section{The bijection for pairs of nested paths}\label{sec:bij}
The goal of this section is to describe a bijection between $\PP_n$ and $\GG_n$.
Our construction passes through an intermediate set 
$$\M_n=\{(P,Q)\in\AA_n:-P\le Q\le P,\,h(P)=h(Q)\}.$$
We will construct two bijections as follows:
$$\PP_n \overset{\bij}{\longleftarrow} \M_n \overset{\bijMG}{\longrightarrow} \GG_n.$$
Whereas the bijection $\bijMG$ between $\M_n$ and $\GG_n$ will be relatively straightforward, the bijection $\bij$ between $\M_n$ and $\PP_n$ requires more work.

\subsection{The bijection $\bij:\M_n\to\PP_n$}
We can express these two sets as disjoint unions $\M_n=\bigcup_{i} \M_{n,i}$ and $\PP_{n}=\bigcup_{i} \PP_{n,i}$, where
\begin{align*}
\M_{n,i}&=\{(P,Q)\in\AA_n:-P\le Q\le P, \,h(P)=h(Q)=i\},\\
\PP_{n,i}&=\{(P,Q)\in\PP_n:h(Q)=i\},
\end{align*}
and the unions are over all $i$ with $0\le i\le n$ and $i\equiv n\pmod 2$.

We will provide a bijection between $\M_{n,i}$ and $\PP_{n,i}$, and more generally, between the following two sets:
\begin{align*}
\M_{n,i;j}&=\{(P,Q)\in\AA_n:-P\le Q\le P, \,h(P)=i+j,\, h(Q)=i-j\},\\
\PP_{n,i;j}&=\{(P,Q)\in\PP_n:i-j\le h(Q)\le i+j\le h(P)\},
\end{align*}
for any $i\ge j\ge 0$ with $i+j\le n$ and $i+j\equiv n \pmod 2$.
Note that, by definition, $\M_{n,i;0}=\M_{n,i}$ and $\PP_{n,i;0}=\PP_{n,i}$. Thus, our bijection  $\bij:\M_{n,i;j}\to\PP_{n,i;j}$, in the case where $j=0$ and $i$ is allowed to vary, will provide a bijection between $\M_n$ and $\PP_n$.

Before describing $\bij$, let us introduce some terminology. For any $P,Q\in\A_n$, we define their {\em disagreement path} $(P-Q)/2$ to be the path with steps $U$, $D$ and $H=(1,0)$ whose height at each point is half of the difference of heights of $P$ and $Q$. Note that a $U$ step in $(P-Q)/2$ comes from a $U$ step in $P$ and a $D$ step in $Q$, that a $D$ step in $(P-Q)/2$ comes from a $D$ step in $P$ and a $U$ step in $Q$, and that $H$ steps of $(P-Q)/2$ correspond to steps where $P$ and $Q$ agree.

For paths in $\A_n$, we described a matching of $U$ and $D$ steps in the introduction. A very similar matching can be performed for paths with $U$, $D$ and $H$ steps, by just ignoring the $H$ steps, and matching $U$ and $D$ steps that face each other. We define the {\em unmatched} steps of such a path to be the $U$ and $D$ steps that do not get matched in this process. As before, the unmatched $D$ steps are always to the left of the unmatched $U$ steps.

We will use the term {\em flipping} a step to mean changing it from a $U$ to a $D$ or viceversa (equivalently, reflecting it with respect to a horizontal line).

\begin{definition}
Given $(P,Q)\in\M_{n,i;j}$, define $\bij(P,Q)$ to be the pair of paths $(\tP,\tQ)$ constructed as follows:
\begin{enumerate}
\item Let $Q'$ be the path obtained by flipping the steps of $Q$ that end strictly below the $x$-axis.
\item Let $\chi$ be the set of positions of the unmatched $D$ steps of $(P-Q')/2$. 
Let $\tP$ and $\tQ$ be the paths obtained by flipping the steps in $\chi$ of $P$ and $Q'$, respectively.
\end{enumerate}
\end{definition}

An example of the construction of $\bij(P,Q)$ is given in Figure~\ref{fig:bij}.

\begin{figure}[!ht]
  \begin{center}
    \begin{tikzpicture}[scale=0.45]
    \def\U{(1,1)}     \def\D{(1,-1)}
      \draw[densely dotted] (0,0)--(21,0);
      \draw[red,thick] (0,0.05) coordinate(d0)
      -- ++\U coordinate(d1)
      -- ++\U coordinate(d2)
      -- ++\D coordinate(d3)
      -- ++\U coordinate(d4)
      -- ++\U coordinate(d5)
      -- ++\U coordinate(d6)
      -- ++\U  coordinate(d7)
      -- ++\D coordinate(d8)
      -- ++\U coordinate(d9)
      -- ++\U  coordinate(d10)
      -- ++\U  coordinate(d11)
      -- ++\D coordinate(d12)
      -- ++\D coordinate(d13)
      -- ++\U coordinate(d14)
      -- ++\D  coordinate(d15)
      -- ++\D coordinate(d16)      
      -- ++\D coordinate(d17)
      -- ++\U coordinate(d18)
      -- ++\D  coordinate(d19)
      -- ++\U coordinate(d20)      
      -- ++\U coordinate(d21)
      ;
      \foreach \x in {0,...,21} {
        \draw (d\x) circle (1.5pt);
      }
      \draw[blue,thick] (0,-0.05) coordinate(d0)
      -- ++\D coordinate(d1)
      -- ++\U coordinate(d2)
      -- ++\D coordinate(d3)
      -- ++\D coordinate(d4)
      -- ++\U coordinate(d5)
      -- ++\U coordinate(d6)
      -- ++\U  coordinate(d7)
      -- ++\U coordinate(d8)
      -- ++\U coordinate(d9)
      -- ++\U  coordinate(d10)
      -- ++\U  coordinate(d11)
      -- ++\D coordinate(d12)
      -- ++\U coordinate(d13)
      -- ++\U coordinate(d14)
      -- ++\D  coordinate(d15)
      -- ++\D coordinate(d16)      
      -- ++\D coordinate(d17)
      -- ++\D coordinate(d18)
      -- ++\U  coordinate(d19)
      -- ++\D coordinate(d20)      
      -- ++\U coordinate(d21)
      ;
      \foreach \x in {0,...,21} {
        \draw (d\x) circle (1.5pt);
      }
      \draw[red] (6,5) node {$P$};
      \draw[blue] (9,2) node {$Q$};
      \draw[thick,dotted,yellow] (d0)--(d1);
      \draw[thick,dotted,yellow] (d2)--(d3)--(d4)--(d5);
      \draw[thick] (d1)--(d2);
      \draw[thick] (d5)--(d6);
      \draw (1.75,-.75) node {\footnotesize $\R$};
      \draw (5.75,-.75) node {\footnotesize $\R$};
      \draw (21,3) node[right] {\small $i-j$};
      \draw (21,5) node[right] {\small $i+j$};
	  \draw (1,6) node {$\M_{n,i;j}$};
      \draw (11,-1.5) node {$\updownarrow$};
\end{tikzpicture}\\
    \begin{tikzpicture}[scale=0.45]
    \def\U{(1,1)}     \def\D{(1,-1)}    \def\H{(1,0)}
      \draw[densely dotted] (0,0)--(21,0);
      \draw[red,thick] (0,0) coordinate(d0)
      -- ++\U coordinate(d1)
      -- ++\U coordinate(d2)
      -- ++\D coordinate(d3)
      -- ++\U coordinate(d4)
      -- ++\U coordinate(d5)
      -- ++\U coordinate(d6)
      -- ++\U  coordinate(d7)
      -- ++\D coordinate(d8)
      -- ++\U coordinate(d9)
      -- ++\U  coordinate(d10)
      -- ++\U  coordinate(d11)
      -- ++\D coordinate(d12)
      -- ++\D coordinate(d13)
      -- ++\U coordinate(d14)
      -- ++\D  coordinate(d15)
      -- ++\D coordinate(d16)      
      -- ++\D coordinate(d17)
      -- ++\U coordinate(d18)
      -- ++\D  coordinate(d19)
      -- ++\U coordinate(d20)      
      -- ++\U coordinate(d21)
      ;
      \foreach \x in {0,...,21} {
        \draw (d\x) circle (1.5pt);
      }
      \draw[blue,thick] (0,0.05) coordinate(d0)
      -- ++\U coordinate(d1)
      -- ++\U coordinate(d2)
      -- ++\U coordinate(d3)
      -- ++\U coordinate(d4)
      -- ++\D coordinate(d5)
      -- ++\U coordinate(d6)
      -- ++\U  coordinate(d7)
      -- ++\U coordinate(d8)
      -- ++\U coordinate(d9)
      -- ++\U  coordinate(d10)
      -- ++\U  coordinate(d11)
      -- ++\D coordinate(d12)
      -- ++\U coordinate(d13)
      -- ++\U coordinate(d14)
      -- ++\D  coordinate(d15)
      -- ++\D coordinate(d16)      
      -- ++\D coordinate(d17)
      -- ++\D coordinate(d18)
      -- ++\U  coordinate(d19)
      -- ++\D coordinate(d20)      
      -- ++\U coordinate(d21)
      ;
      \foreach \x in {0,...,21} {
        \draw (d\x) circle (1.5pt);
      }
      \draw[red] (10,5) node {$P$};
      \draw[blue] (9,8) node {$Q'$};
      \draw[thick,dotted,yellow] (d0)--(d1);
      \draw[thick,dotted,yellow] (d2)--(d3)--(d4)--(d5);
      \draw[thick] (d1)--(d2);
      \draw[thick] (d5)--(d6);
      \draw (1.75,1.25) node {\footnotesize $\R$};
      \draw (5.75,3.25) node {\footnotesize $\R$};
      \draw[blue,dotted] (.2,.95) node {\footnotesize $Q'_0$};
      \draw[blue,dotted] (3.75,3) node {\footnotesize $Q'_1$};
	  %(P-Q')/2
	 \draw[dashed] (0,0) coordinate(d0)
      -- ++\H coordinate(d1)
      -- ++\H coordinate(d2)
      -- ++\D coordinate(d3)
      -- ++\H coordinate(d4)
      -- ++\U coordinate(d5)
      -- ++\H coordinate(d6)
      -- ++\H  coordinate(d7)
      -- ++\D coordinate(d8)
      -- ++\H coordinate(d9)
      -- ++\H  coordinate(d10)
      -- ++\H  coordinate(d11)
      -- ++\H coordinate(d12)
      -- ++\D coordinate(d13)
      -- ++\H coordinate(d14)
      -- ++\H  coordinate(d15)
      -- ++\H coordinate(d16)      
      -- ++\H coordinate(d17)
      -- ++\U coordinate(d18)
      -- ++\D  coordinate(d19)
      -- ++\U coordinate(d20)      
      -- ++\H coordinate(d21)
      ;
      \foreach \x in {0,...,21} {
        \draw (d\x) circle (1.2pt);
      }
      \draw[thick,green] (d2)--(d3);      \draw[thick,green] (d12)--(d13);
      \draw[green] (2.25,-0.75) node {\footnotesize $\chi$};
      \draw[green] (12.25,-1.75) node {\footnotesize $\chi$};
      \draw (14.5,-1.25) node {$\frac{P-Q'}{2}$};
  	  \draw (21,3) circle (1.5pt);
      \draw (21,3) node[right] {\small $i-j$};
      \draw (21,5) node[right] {\small $i+j$}; 
      \draw (11,-3) node {$\updownarrow$};
\end{tikzpicture}\\
\begin{tikzpicture}[scale=0.45]
  \def\U{(1,1)}     \def\D{(1,-1)}	 \def\H{(1,0)}
     \draw[densely dotted] (0,0)--(21,0);
      \draw[red,thick] (0,0.05) coordinate(d0)
      -- ++\U coordinate(d1)
      -- ++\U coordinate(d2)
      -- ++\U coordinate(d3)
      -- ++\U coordinate(d4)
      -- ++\U coordinate(d5)
      -- ++\U coordinate(d6)
      -- ++\U  coordinate(d7)
      -- ++\D coordinate(d8)
      -- ++\U coordinate(d9)
      -- ++\U  coordinate(d10)
      -- ++\U  coordinate(d11)
      -- ++\D coordinate(d12)
      -- ++\U coordinate(d13)
      -- ++\U coordinate(d14)
      -- ++\D  coordinate(d15)
      -- ++\D coordinate(d16)      
      -- ++\D coordinate(d17)
      -- ++\U coordinate(d18)
      -- ++\D  coordinate(d19)
      -- ++\U coordinate(d20)      
      -- ++\U coordinate(d21)
      ;
      \foreach \x in {0,...,21} {
        \draw (d\x) circle (1.5pt);
      }
      \draw[blue,thick] (0,0) coordinate(d0)
      -- ++\U coordinate(d1)
      -- ++\U coordinate(d2)
      -- ++\D coordinate(d3)
      -- ++\U coordinate(d4)
      -- ++\D coordinate(d5)
      -- ++\U coordinate(d6)
      -- ++\U  coordinate(d7)
      -- ++\U coordinate(d8)
      -- ++\U coordinate(d9)
      -- ++\U  coordinate(d10)
      -- ++\U  coordinate(d11)
      -- ++\D coordinate(d12)
      -- ++\D coordinate(d13)
      -- ++\U coordinate(d14)
      -- ++\D  coordinate(d15)
      -- ++\D coordinate(d16)      
      -- ++\D coordinate(d17)
      -- ++\D coordinate(d18)
      -- ++\U  coordinate(d19)
      -- ++\D coordinate(d20)      
      -- ++\U coordinate(d21)
      ;
      \foreach \x in {0,...,21} {
        \draw (d\x) circle (1.5pt);
      }
      \draw[red] (6,7) node {$\tP$};
      \draw[blue] (9,4) node {$\tQ$};  
	  %(\tP-\tQ')/2
	 \draw[dashed] (0,0) coordinate(d0)
      -- ++\H coordinate(d1)
      -- ++\H coordinate(d2)
      -- ++\U coordinate(d3)
      -- ++\H coordinate(d4)
      -- ++\U coordinate(d5)
      -- ++\H coordinate(d6)
      -- ++\H  coordinate(d7)
      -- ++\D coordinate(d8)
      -- ++\H coordinate(d9)
      -- ++\H  coordinate(d10)
      -- ++\H  coordinate(d11)
      -- ++\H coordinate(d12)
      -- ++\U coordinate(d13)
      -- ++\H coordinate(d14)
      -- ++\H  coordinate(d15)
      -- ++\H coordinate(d16)      
      -- ++\H coordinate(d17)
      -- ++\U coordinate(d18)
      -- ++\D  coordinate(d19)
      -- ++\U coordinate(d20)      
      -- ++\H coordinate(d21)
      ;
      \foreach \x in {0,...,21} {
        \draw (d\x) circle (1.2pt);
      }
      \draw[thick, green] (d2)--(d3);      \draw[thick, green] (d12)--(d13);
      \draw[green] (2.25,0.75) node {\footnotesize $\chi$};
      \draw[green] (12.25,1.75) node {\footnotesize $\chi$};
      \draw (14.5,1.25) node {$\frac{\tP-\tQ}{2}$};      
	  \draw (21,5) circle (1.5pt);
      \draw (21,3) node[right] {\small $i-j$};
      \draw (21,5) node[right] {\small $i+j$}; 
      \draw (1,8) node {$\PP_{n,i;j}$};
\end{tikzpicture}
  \end{center}
  \caption{The bijection $\bij:\M_{n,i;j}\to\PP_{n,i;j}$.  The steps that are flipped in going between $Q$ and $Q'$ are dotted in yellow. The steps $\chi$ that are flipped in going between $(P,Q')$ and $(\tP,\tQ)$ are thicker and green in the disagreement path.}\label{fig:bij}
\end{figure}
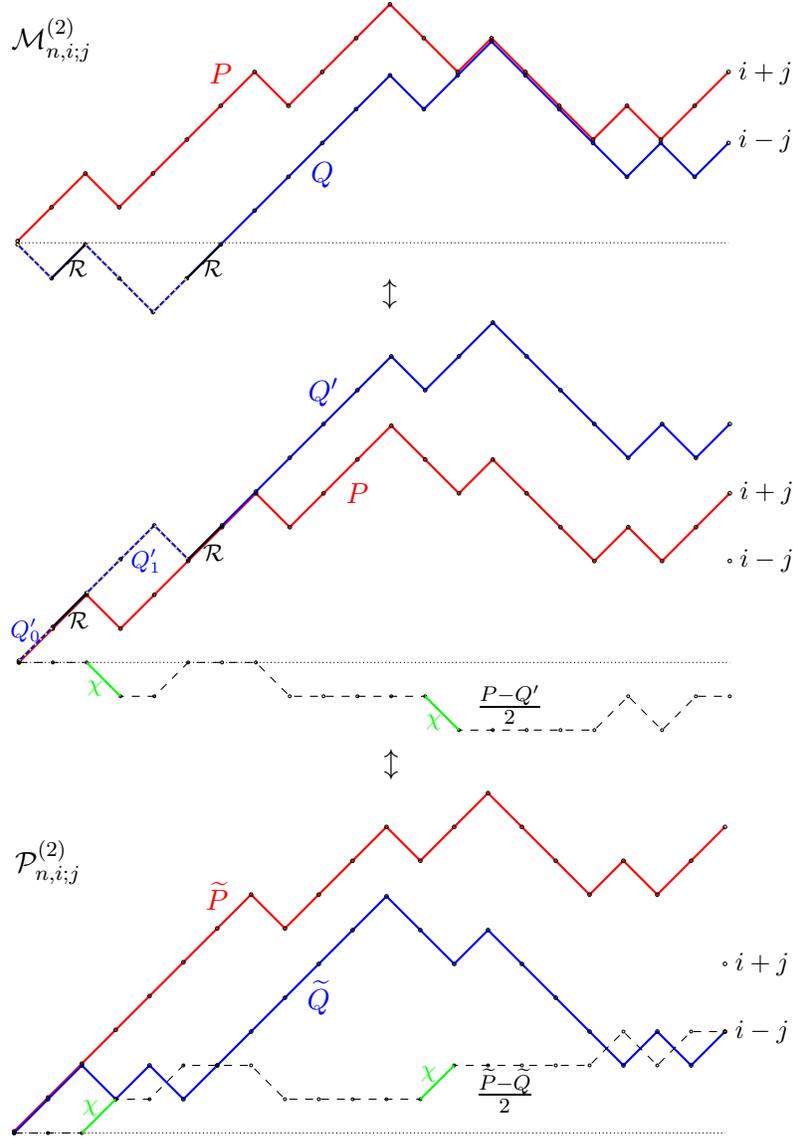

\begin{theorem}\label{thm:bijMP}\label{THM:BIJMP}
The map $\bij$ is a bijection between $\M_{n,i;j}$ and $\PP_{n,i;j}$.
\end{theorem}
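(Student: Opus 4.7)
The plan is to prove bijectivity by exhibiting an explicit two-step inverse $\bij^{-1}$ that reverses each of the two construction steps in turn, and then checking both well-definedness and the mutual inverse property. The overall strategy relies on two structural observations: first, that step~1 ($Q\mapsto Q'$) amounts to reflecting each below-excursion of $Q$ above the $x$-axis, so that $Q'$ decomposes into the above-excursions of $Q$ shifted upward and \emph{bridges} of height-rise $2$ coming from the below-excursions, giving in particular $Q'\ge 0$ and $h(Q')=h(Q)+2m$ where $m$ is the number of below-excursions of $Q$; second, that step~2 is an analogue of the bijection $\xi$ applied to the disagreement path $(P-Q')/2$, so matched pairs in $(P-Q')/2$ coincide with matched pairs in $(\tP-\tQ)/2$.

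For well-definedness of $\bij$, I would verify: (i) $Q'\ge 0$ via the bridge decomposition; (ii) $\tP\ge\tQ$, since by the $\xi$-style argument flipping the unmatched $D$'s of $(P-Q')/2$ turns them into unmatched $U$'s of the same path, making $(\tP-\tQ)/2\ge 0$; (iii) $\tP\ge 0$, which follows because flipping $D$'s of $P$ into $U$'s at positions in $\chi$ can only raise heights, so $\tP\ge P\ge 0$; (iv) $\tQ\ge 0$, the most delicate point, which I would handle by showing that flipping $Q'$ at positions of $\chi$ exactly cancels bridge-induced height shifts and leaves $\tQ$ nonnegative; (v) the endpoint constraints $i-j\le h(\tQ)\le i+j\le h(\tP)$, which follow from the arithmetic $h(\tP)=(i+j)+2|\chi|$ and $h(\tQ)=(i-j)+2m-2|\chi|$ together with suitable bounds on $|\chi|$.

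For the inverse, given $(\tP,\tQ)\in\PP_{n,i;j}$, I would first reverse step~2 by a $\xi^{-1}$-style move: setting $s=(h(\tP)-(i+j))/2$, find the leftmost $s$ unmatched $U$ steps in $(\tP-\tQ)/2$ and flip those positions in both $\tP$ and $\tQ$ to recover $(P,Q')$. To reverse step~1, use the key structural observation that the unmatched $U$ steps of $Q'$ come in position order as $2m$ alternating bridge-entries and bridge-exits followed by $h(Q)$ ``final'' unmatched $U$'s from the last above-excursion of $Q$, where $m=(h(Q')-(i-j))/2$ is known. Reading off the first $2m$ unmatched $U$'s of $Q'$ as pairs (entry, exit) reveals the bridge structure, and flipping every step of each bridge except its exit recovers $Q$.

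The two main obstacles are: first, verifying $\tQ\ge 0$ in the well-definedness step, for which I would exploit the fact that every position in $\chi$ corresponds to a location where $(P-Q')/2$ reaches a new negative minimum, and argue that the cumulative height-shift $-2|\{p\in\chi:p\le a\}|$ applied to $Q'$ never exceeds $Q'(a)$; and second, justifying the matching-based dictionary between $Q'$ and its bridges, specifically that each bridge entry and exit is unmatched in $Q'$ while all other portions (pre-segment, inter-bridge above-segments, and bridge middles) are Dyck-like and contribute no unmatched $U$'s. Granted these, both $\bij\circ\bij^{-1}=\mathrm{id}$ and $\bij^{-1}\circ\bij=\mathrm{id}$ follow: the $\xi$-style step preserves matched pairs so that $\chi$ is correctly recovered from the matching of $(\tP-\tQ)/2$, and the bridge reconstruction exactly inverts the reflection of below-excursions.
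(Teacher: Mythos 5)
Your construction follows essentially the same route as the paper: the same identification of $\chi$ with the positions where $(P-Q')/2$ reaches a new minimum, the same recovery of $|\chi|$ from $h(\tP)$, and the same reading of the flipped below-excursions of $Q$ off the low-order unmatched $U$ steps of $Q'$ (your ``bridges'' are the paper's fragments $Q'_l$, and your entry/exit pairs are its last up-steps crossing heights $2l$ and $2l+1$; this dictionary is indeed correct). The forward well-definedness issues you flag are the right ones, but the one you call most delicate, $\tQ\ge0$, is not actually resolved by your sketch: the paper proves it via $2|\chi^{\le a}|=\max_{0\le b\le a}\{h_b(Q')-h_b(P)\}\le 2|\R^{\le a}|$, where $\R^{\le a}$ counts the lower returns of $Q$ up to $x=a$. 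This comparison between $\chi$ and the bridge exits is exactly where the hypothesis $-P\le Q\le P$ enters, and it is also what yields your ``suitable bounds'' $r-j\le|\chi|\le r$ for the endpoint constraints. You point at the right objects but never derive this inequality.

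The genuine gap is in surjectivity. You claim that, granted your two obstacles, $\bij\circ\bij^{-1}=\mathrm{id}$ ``follows,'' but for that composition to prove surjectivity of $\bij:\M_{n,i;j}\to\PP_{n,i;j}$ you must first show that your inverse, applied to an \emph{arbitrary} $(\tP,\tQ)\in\PP_{n,i;j}$, produces a pair that actually lies in $\M_{n,i;j}$ --- in particular that the reconstructed $(P,Q)$ satisfies the nesting condition $-P\le Q\le P$. This is a third, independent obstacle, not a consequence of the two you list, and it is the hardest step of the paper's argument (Lemma~\ref{lem:surjective}). There one needs $h_a(Q')-h_a(P)\le 2|\R^{\le a}|$ for all $a$, where now $\R$ must be read off from $Q'$ as the set of bridge exits rather than from a given $Q$, and the paper obtains it from the chain
$$h_a(Q')-h_a(P)\;\le\; 2|\chi^{\le a}|\;\le\;\min_{a\le b\le n}h_b(Q')\;\le\; 2|\R^{\le a}|.$$
Without this verification, exhibiting formulas for a two-step ``inverse'' establishes injectivity but not that every element of $\PP_{n,i;j}$ has a preimage in $\M_{n,i;j}$.
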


For the sake of continuity, the proof of this theorem will be postponed until Section~\ref{sec:proofs}.

\subsection{The bijection $\bijMG:\M_n\to\GG_n$}
As we did for $\bij$ in the previous subsection, we will provide a bijection between more general sets than $\M_n$ and $\GG_n$.
For any $P,Q\in\A_n$, define their {\em agreement path} $(P+Q)/2$ to be the path with steps $U$, $D$ and $H$ whose height at each point is half of the sum of heights of $P$ and $Q$. Note that this path agrees with $P$ and $Q$ in the positions where $P$ and $Q$ agree, and it has $H$ steps in the positions where $P$ and $Q$ disagree. Define $\ell(P,Q)$ to be the $y$-coordinate of the lowest point of $(P+Q)/2$.

Fix $i\ge j\ge 0$ with $i+j\le n$ and $i+j\equiv n \pmod 2$, and let
$$\GG_{n,i;j}=\{(P,Q)\in\AA_n:\ell(P,Q)=\lfloor i/2 \rfloor,\,h(P)=j+\delta_{i},\,h(Q)=-j+\delta_{i}\},$$
where $\delta_i$ denotes the remainder of the division of $i$ by $2$.

Next we describe a bijection $\bijMG:\M_{n,i;j}\to\GG_{n,i;j}$. Note that, in the special case that $j=0$, taking the union over all $i$ with $0\le i\le n$ and $i\equiv n \pmod 2$,
the map $\bijMG$ will give a bijection between $$\M_n=\bigcup_{i} \M_{n,i;0} \quad\text{and}\quad \GG_{n}=\bigcup_{i} \GG_{n,i;0}.$$ 

Given $(P,Q)\in\M_{n,i;j}$, the path $(P+Q)/2$ never goes below the $x$-axis because $-P\le Q$.
The non-horizontal steps in $(P+Q)/2$, which are those in the positions where $P$ and $Q$ agree, form a Dyck path prefix ending at height $$h\left(\frac{P+Q}{2}\right)=\frac{h(P)+h(Q)}{2}=\frac{(i+j)+(i-j)}{2}=i.$$ One can apply to these steps the bijection $\xi$ described in the introduction, which turns the leftmost $\lfloor i/2 \rfloor$ unmatched $U$ steps into $D$ steps, producing a Grand Dyck path. Let $\widehat{P}$ and $\widehat{Q}$ be the paths obtained by changing the corresponding 
$\lfloor i/2 \rfloor$ $U$ steps of $P$ and $Q$, respectively, into $D$ steps. Define $\bijMG(P,Q)=(\widehat{P},\widehat{Q})$.

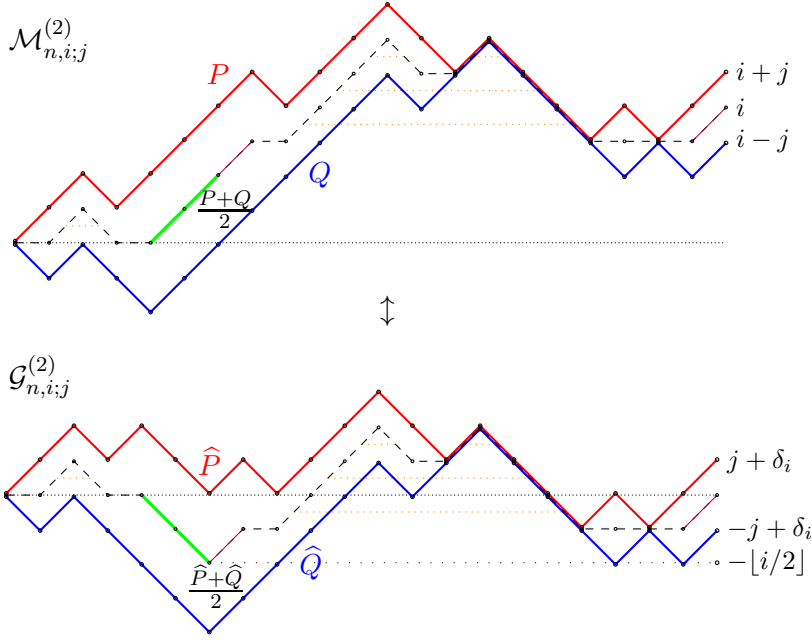
\begin{figure}[htb]
  \begin{center}
  \begin{tabular}{l}
    \begin{tikzpicture}[scale=0.45]
    \def\U{(1,1)}     \def\D{(1,-1)} \def\H{(1,0)}
      \draw[densely dotted] (0,0)--(21,0);
      \draw[red,thick] (0,0.05) coordinate(d0)
      -- ++\U coordinate(d1)
      -- ++\U coordinate(d2)
      -- ++\D coordinate(d3)
      -- ++\U coordinate(d4)
      -- ++\U coordinate(d5)
      -- ++\U coordinate(d6)
      -- ++\U  coordinate(d7)
      -- ++\D coordinate(d8)
      -- ++\U coordinate(d9)
      -- ++\U  coordinate(d10)
      -- ++\U  coordinate(d11)
      -- ++\D coordinate(d12)
      -- ++\D coordinate(d13)
      -- ++\U coordinate(d14)
      -- ++\D  coordinate(d15)
      -- ++\D coordinate(d16)      
      -- ++\D coordinate(d17)
      -- ++\U coordinate(d18)
      -- ++\D  coordinate(d19)
      -- ++\U coordinate(d20)      
      -- ++\U coordinate(d21)
      ;
      \draw[dotted,orange] (1.5,.5)--(2.5,.5);
      \draw[dotted,orange] (8.5,3.5)--(16.5,3.5);
      \draw[dotted,orange] (9.5,4.5)--(15.5,4.5);
      \draw[dotted,orange] (10.5,5.5)--(11.5,5.5);
      \draw[dotted,orange] (13.5,5.5)--(14.5,5.5);
      \foreach \x in {0,...,21} {
        \draw (d\x) circle (1.5pt);
      }      
      \draw[blue,thick] (0,-0.05) coordinate(d0)
      -- ++\D coordinate(d1)
      -- ++\U coordinate(d2)
      -- ++\D coordinate(d3)
      -- ++\D coordinate(d4)
      -- ++\U coordinate(d5)
      -- ++\U coordinate(d6)
      -- ++\U  coordinate(d7)
      -- ++\U coordinate(d8)
      -- ++\U coordinate(d9)
      -- ++\U  coordinate(d10)
      -- ++\U  coordinate(d11)
      -- ++\D coordinate(d12)
      -- ++\U coordinate(d13)
      -- ++\U coordinate(d14)
      -- ++\D  coordinate(d15)
      -- ++\D coordinate(d16)      
      -- ++\D coordinate(d17)
      -- ++\D coordinate(d18)
      -- ++\U  coordinate(d19)
      -- ++\D coordinate(d20)      
      -- ++\U coordinate(d21)
      ;
      \foreach \x in {0,...,21} {
        \draw (d\x) circle (1.5pt);
      }
      \draw[red] (6,5) node {$P$};
      \draw[blue] (9,2) node {$Q$};
      \draw (21,3) node[right] {\small $i-j$};
      \draw (21,4) node[right] {\small $i$};
      \draw (21,5) node[right] {\small $i+j$};
	  \draw (1,6) node {$\M_{n,i;j}$};
	 \draw[dashed] (0,0) coordinate(d0)
      -- ++\H coordinate(d1)
      -- ++\U coordinate(d2)
      -- ++\D coordinate(d3)
      -- ++\H coordinate(d4)
      -- ++\U coordinate(d5)
      -- ++\U coordinate(d6)
      -- ++\U  coordinate(d7)
      -- ++\H coordinate(d8)
      -- ++\U coordinate(d9)
      -- ++\U  coordinate(d10)
      -- ++\U  coordinate(d11)
      -- ++\D coordinate(d12)
      -- ++\H coordinate(d13)
      -- ++\U coordinate(d14)
      -- ++\D  coordinate(d15)
      -- ++\D coordinate(d16)      
      -- ++\D coordinate(d17)
      -- ++\H coordinate(d18)
      -- ++\H  coordinate(d19)
      -- ++\H coordinate(d20)      
      -- ++\U coordinate(d21)
      ;
      \draw[green,very thick] (d4)--(d6);      
      \draw[purple] (d6)--(d7);         \draw[purple] (d20)--(d21);
      \foreach \x in {0,...,21} {
        \draw (d\x) circle (1.2pt);
      }
      \draw (6.2,1) node {$\frac{P+Q}{2}$};  
	  
      \draw (11,-2) node {$\updownarrow$};
\end{tikzpicture}\\
   \begin{tikzpicture}[scale=0.45]
    \def\U{(1,1)}     \def\D{(1,-1)} \def\H{(1,0)}
      \draw[densely dotted] (0,0)--(21,0);
      \draw[loosely dotted, thin] (6,-2)--(21,-2);
      \draw[red,thick] (0,0.05) coordinate(d0)
      -- ++\U coordinate(d1)
      -- ++\U coordinate(d2)
      -- ++\D coordinate(d3)
      -- ++\U coordinate(d4)
      -- ++\D coordinate(d5)
      -- ++\D coordinate(d6)
      -- ++\U  coordinate(d7)
      -- ++\D coordinate(d8)
      -- ++\U coordinate(d9)
      -- ++\U  coordinate(d10)
      -- ++\U  coordinate(d11)
      -- ++\D coordinate(d12)
      -- ++\D coordinate(d13)
      -- ++\U coordinate(d14)
      -- ++\D  coordinate(d15)
      -- ++\D coordinate(d16)      
      -- ++\D coordinate(d17)
      -- ++\U coordinate(d18)
      -- ++\D  coordinate(d19)
      -- ++\U coordinate(d20)      
      -- ++\U coordinate(d21)
      ;
      \draw[dotted,orange] (1.5,.5)--(2.5,.5);
      \draw[dotted,orange] (8.5,-.5)--(16.5,-.5);
      \draw[dotted,orange] (9.5,.5)--(15.5,.5);
      \draw[dotted,orange] (10.5,1.5)--(11.5,1.5);
      \draw[dotted,orange] (13.5,1.5)--(14.5,1.5);
      \foreach \x in {0,...,21} {
        \draw (d\x) circle (1.5pt);
      }      
      \draw[blue,thick] (0,-0.05) coordinate(d0)
      -- ++\D coordinate(d1)
      -- ++\U coordinate(d2)
      -- ++\D coordinate(d3)
      -- ++\D coordinate(d4)
      -- ++\D coordinate(d5)
      -- ++\D coordinate(d6)
      -- ++\U  coordinate(d7)
      -- ++\U coordinate(d8)
      -- ++\U coordinate(d9)
      -- ++\U  coordinate(d10)
      -- ++\U  coordinate(d11)
      -- ++\D coordinate(d12)
      -- ++\U coordinate(d13)
      -- ++\U coordinate(d14)
      -- ++\D  coordinate(d15)
      -- ++\D coordinate(d16)      
      -- ++\D coordinate(d17)
      -- ++\D coordinate(d18)
      -- ++\U  coordinate(d19)
      -- ++\D coordinate(d20)      
      -- ++\U coordinate(d21)
      ;
      \foreach \x in {0,...,21} {
        \draw (d\x) circle (1.5pt);
      }
      \draw[red] (6,1) node {$\widehat{P}$};
      \draw[blue] (9,-2) node {$\widehat{Q}$};
      \draw (21,1) node[right] {\small $j+\delta_i$};
      \draw (21,-1) node[right] {\small $-j+\delta_i$};
  	  \draw (21,-2) circle (1.5pt);
  	  \draw (21,-2) node[right] {\small $-\lfloor i/2 \rfloor$};
	  \draw (1,3.5) node {$\GG_{n,i;j}$};
	 \draw[dashed] (0,0) coordinate(d0)
      -- ++\H coordinate(d1)
      -- ++\U coordinate(d2)
      -- ++\D coordinate(d3)
      -- ++\H coordinate(d4)
      -- ++\D coordinate(d5)
      -- ++\D coordinate(d6)
      -- ++\U  coordinate(d7)
      -- ++\H coordinate(d8)
      -- ++\U coordinate(d9)
      -- ++\U  coordinate(d10)
      -- ++\U  coordinate(d11)
      -- ++\D coordinate(d12)
      -- ++\H coordinate(d13)
      -- ++\U coordinate(d14)
      -- ++\D  coordinate(d15)
      -- ++\D coordinate(d16)      
      -- ++\D coordinate(d17)
      -- ++\H coordinate(d18)
      -- ++\H  coordinate(d19)
      -- ++\H coordinate(d20)      
      -- ++\U coordinate(d21)
      ;
      \draw[green,very thick] (d4)--(d6);      
      \draw[purple] (d6)--(d7);         \draw[purple] (d20)--(d21);
      \foreach \x in {0,...,21} {
        \draw (d\x) circle (1.2pt);
      }
      \draw (6.2,-2.7) node {$\frac{\widehat{P}+\widehat{Q}}{2}$};  
\end{tikzpicture}
\end{tabular}
  \end{center}
  \caption{The bijection $\bijMG:\M_{n,i;j}\to\GG_{n,i;j}$. %The tunnels of $(P+Q)/2$ are dotted, and 
  The unmatched steps of the agreement path that are changed are thicker and green, while the remaining unmatched steps are purple.
}
\end{figure}

\begin{proposition}\label{prop:bijMG}
The map $\bijMG$ is a bijection between $\M_{n,i;j}$ and $\GG_{n,i;j}$.
\end{proposition}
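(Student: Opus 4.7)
The plan is to recognize that $\bijMG$ is essentially $\xi$ applied to a sub-path, and then invoke the known bijectivity of $\xi$. The key structural observation is that $\bijMG$ only flips steps at positions where $P$ and $Q$ agree (both being $U$), and flips both of them simultaneously in the same way (to $D$). Consequently, the disagreement path $(P-Q)/2$ is preserved, as is the set of positions of $H$ steps in the agreement path $(P+Q)/2$. Since $(P,Q)$ is uniquely recovered from the pair $\bigl((P+Q)/2, (P-Q)/2\bigr)$, bijectivity of $\bijMG$ reduces to bijectivity of the induced map on agreement paths, which, because the $H$-positions are fixed, reduces further to the action on the non-$H$ sub-path. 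Because $-P\le Q$ forces $(P+Q)/2\ge 0$, this sub-path is a Dyck path prefix of some length $m$ and ending height $i$, and the action of $\bijMG$ on it is precisely $\xi$.

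The main obstacle is to show that $\xi$ restricts to a bijection between the Dyck path prefixes in $\P_m$ of ending height $i$ and the Grand Dyck paths in $\G_m$ of ending height $\delta_i$ whose leftmost lowest point has height $-\lfloor i/2\rfloor$. I would prove this by analyzing the $UD$-matching directly. For any Dyck path prefix $R$ of ending height $i$, a short stack argument shows that if $u_1<\dots<u_i$ are its unmatched $U$ positions, then no matched $U$ to the left of $u_k$ can have its matching $D$ to the right of $u_k$ (otherwise the greedy left-to-right matching would pair that $D$ with $u_k$, contradicting that $u_k$ is unmatched). Hence the height just before $u_k$ in $R$ equals $k-1$; after $\xi$ flips $u_1,\dots,u_{\lfloor i/2\rfloor}$ to $D$, the height just after the $k$-th flipped step in $\xi(R)$ becomes $-k$, placing the leftmost lowest point of $\xi(R)$ at $-\lfloor i/2\rfloor$. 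Conversely, the leftmost lowest point of a Grand Dyck path equals $-d$ where $d$ is its number of unmatched $D$ steps; so if this minimum is $-\lfloor i/2\rfloor$, then $\xi^{-1}$ flips $\lfloor i/2\rfloor$ unmatched $D$'s into $U$'s, yielding a Dyck path prefix of ending height $\delta_i + 2\lfloor i/2\rfloor = i$.

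Finally I would verify the remaining parameters of $\GG_{n,i;j}$. Disagreement invariance gives $\widehat{Q} \le \widehat{P} \Leftrightarrow Q \le P$. The $\lfloor i/2\rfloor$ flips in $P$ and $Q$ yield $h(\widehat{P}) = (i+j)-2\lfloor i/2\rfloor = j+\delta_i$ and $h(\widehat{Q}) = (i-j)-2\lfloor i/2\rfloor = -j+\delta_i$. The lowest-point condition $\ell(\widehat{P},\widehat{Q}) = -\lfloor i/2\rfloor$ follows because $H$ steps do not affect heights, so the minimum of the full agreement path coincides with the minimum of its non-$H$ sub-path, which is controlled by the key claim above. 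In the reverse direction, $\xi^{-1}$ produces a non-$H$ sub-path that is a Dyck path prefix, so the reconstructed agreement path is non-negative; combined with the preserved disagreement path and $H$-positions, this gives $-P\le Q\le P$ with the correct ending heights, placing $\bijMG^{-1}(\widehat{P},\widehat{Q}) \in \M_{n,i;j}$ and completing the proof.
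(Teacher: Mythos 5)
Your proposal is correct and follows essentially the same route as the paper: both reduce $\bijMG$ to the action of $\xi$ on the non-horizontal steps of the agreement path $(P+Q)/2$, note that the disagreement data is untouched, and invoke the bijectivity of $\xi$ together with the height/lowest-point bookkeeping to identify the image as $\GG_{n,i;j}$. The only difference is one of detail: you explicitly prove, via the matching analysis, that $\xi$ sends prefixes of ending height $i$ exactly onto Grand Dyck paths with leftmost lowest point at $-\lfloor i/2\rfloor$, a fact the paper treats as immediate from the properties of $\xi$ established in the introduction.
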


\begin{proof}
Let $(P,Q)\in\M_{n,i;j}$ and let $(\widehat{P},\widehat{Q})=\bijMG(P,Q)$.
When $\xi$ is applied to the non-horizontal steps of $(P+Q)/2$ to produce the path $(\widehat{P}+\widehat{Q})/2$,
it changes the leftmost $\lfloor i/2 \rfloor$ unmatched $U$ steps into $D$ steps. It follows that
$$\ell(\widehat{P},\widehat{Q})=-\left\lfloor\frac{i}{2}\right\rfloor,$$
that
$$h(\widehat{P})=h(P)-2\left\lfloor\frac{i}{2}\right\rfloor=i+j-(i-\delta_{i})=j+\delta_{i},$$
and similarly that $h(\widehat{Q})=-j+\delta_{i}$,
so $(\widehat{P},\widehat{Q})\in\GG_{n;j}$.

To see that $\bijMG$ is a bijection, we describe its inverse. Given $(\widehat{P},\widehat{Q})\in\GG_{n,i;j}$, 
consider the path $(\widehat{P}+\widehat{Q})/2$. Its non-horizontal steps determine a Grand Dyck path, since $h((\widehat{P}+\widehat{Q})/2)=\delta_{i}\in\{0,1\}$. 
Applying $\xi^{-1}$ to this path changes its $\lfloor i/2\rfloor$ unmatched $D$ steps into $U$ steps, producing a path that does not go below the $x$-axis. Let $P$ and $Q$ be the paths obtained by changing the corresponding $D$ steps of $\widehat{P}$ and $\widehat{Q}$, respectively, into $U$ steps. 
Then $h(P)=h(\widehat{P})+2 \lfloor i/2\rfloor=i+j$ and $h(Q)=h(\widehat{Q})+2 \lfloor i/2\rfloor=i-j$.
Additionally, since $(P+Q)/2\ge0$, we have that $-P\le Q\le P$. Finally, the fact that $\xi$ is a bijection guarantees that $(P,Q)$ is the unique pair such that $\bijMG(P,Q)=(\widehat{P},\widehat{Q})$.
\end{proof}

\begin{corollary}\label{cor:PG}
The map $\bijMG\circ\bij^{-1}$ restricts to a bijection between $\PP_n$ and $\GG_n$.
\end{corollary}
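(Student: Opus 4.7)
The plan is elementary: the corollary follows by pasting together the local bijections from Theorem~\ref{thm:bijMP} and Proposition~\ref{prop:bijMG} in the special case $j=0$, and then composing.

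First I would record the three disjoint decompositions
$$\M_n=\bigsqcup_i \M_{n,i;0},\qquad \PP_n=\bigsqcup_i \PP_{n,i;0},\qquad \GG_n=\bigsqcup_i \GG_{n,i;0},$$
where in each case $i$ ranges over integers with $0\le i\le n$ and $i\equiv n\pmod 2$. The first two decompositions are already explicit in Section~\ref{sec:bij}, using the conventions $\M_{n,i;0}=\M_{n,i}$ and $\PP_{n,i;0}=\PP_{n,i}$. For the third, I would observe that a pair of nested Grand Dyck paths $(\widehat{P},\widehat{Q})$ automatically satisfies $h(\widehat{P})=h(\widehat{Q})\in\{0,1\}$, with this common value equal to $n\bmod 2$; the defining conditions $h(\widehat{P})=j+\delta_i$ and $h(\widehat{Q})=-j+\delta_i$ then force $j=0$ together with $\delta_i=n\bmod 2$, and the remaining parameter is pinned down by $\lfloor i/2\rfloor=-\ell(\widehat{P},\widehat{Q})$. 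Thus each $(\widehat{P},\widehat{Q})\in\GG_n$ lies in exactly one piece $\GG_{n,i;0}$.

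Next I would apply Theorem~\ref{thm:bijMP} with $j=0$ to each index $i$ and take the union: this yields a bijection $\bij:\M_n\to\PP_n$. Likewise Proposition~\ref{prop:bijMG} with $j=0$ assembles into a bijection $\bijMG:\M_n\to\GG_n$. The composition $\bijMG\circ\bij^{-1}:\PP_n\to\GG_n$ is therefore a bijection, as claimed.

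There is no real obstacle here; the only point worth verifying carefully is the disjoint decomposition of $\GG_n$, which rests on the observation that pairs in $\GG_n$ have equal ending heights, thereby forcing the parameter $j$ to vanish.
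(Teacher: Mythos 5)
Your proposal is correct and follows essentially the same route as the paper: decompose each of $\PP_n$, $\M_n$, $\GG_n$ into the pieces with $j=0$ indexed by $i\equiv n\pmod 2$, apply Theorem~\ref{thm:bijMP} and Proposition~\ref{prop:bijMG} piecewise, and compose. Your extra verification that $\GG_n=\bigcup_i\GG_{n,i;0}$ (equal ending heights forcing $j=0$, with $i$ pinned down by the lowest point of the agreement path) is a detail the paper merely asserts in Section~\ref{sec:bij}, and it is a worthwhile check.
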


\begin{proof}
By Theorem~\ref{thm:bijMP} and Proposition~\ref{prop:bijMG}, we have bijections
$$\PP_{n,i;j}\stackrel{\bij^{-1}}{\longrightarrow}\M_{n,i;j}\stackrel{\bijMG}{\longrightarrow}\GG_{n;i,j}.$$
Setting $j=0$ and taking the union over all $i$ with $0\le i\le n$ and $i\equiv n\pmod 2$, we obtain bijections
$$\PP_{n}=\bigcup_i \PP_{n,i;0}\stackrel{\bij^{-1}}{\longrightarrow}\M_{n}=\bigcup_i \M_{n,i;0}\stackrel{\bijMG}{\longrightarrow}\GG_{n}=\bigcup_i \GG_{n,i;0}.$$  
\end{proof}

\section{Walks in the plane}\label{sec:walks}
In this section we interpret the above results about pairs of paths in terms of lattice walks in the plane with steps $N=(0,1)$, $S=(0,-1)$, $E=(1,0)$ and $W=(-1,0)$ starting at the origin. The term {\em walk} will always refer to such a lattice walk in this section. Note that walks are allowed to self-intersect. The {\em length} of a walk is its number of steps.
We write the steps of a walk $w$ as $w_1w_2\dots w_n$, where $w_l\in\{N,S,E,W\}$ for all $l$.

Next we describe a standard bijection $\omega$ between $\A_n\times \A_n$ and the set of walks of length~$n$.
This bijection has been used, among other places, in~\cite{G-B86,BMM}. Given a pair of lattice paths $(P,Q)\in\A_n\times\A_n$, define a walk $w=\omega(P,Q)$ as follows. For each $1\le l\le n$, the $l$-th steps of $P$ and $Q$ determine the $l$-th step of $w$ according to the following rule:
$$\begin{array}{cccc}
p_l&q_l&&w_l\\ \hline
U&U&\mapsto&E\\
U&D&\mapsto&N\\
D&U&\mapsto&S\\
D&D&\mapsto&W
\end{array}$$
Note that if $h_l(P)=a$ and $h_l(Q)=b$, then the coordinate of $\omega(P,Q)$ after $l$ steps is $\left(\frac{a+b}{2},\frac{a-b}{2}\right)$. Equivalently, if the coordinate of $\omega(P,Q)$ after $l$ steps is $(x,y)$, then 
$h_l(P)=x+y$ and $h_l(Q)=x-y$.

An easy consequence is that, under the bijection $\omega$, conditions about the paths $P$ and $Q$ translate into conditions on the walk $\omega(P,Q)$ as described in Table~\ref{tab:omega}.

\begin{table}[h]
\begin{center}
\begin{tabular}{c|c}
Conditions on $(P,Q)\in\A_n\times\A_n$ & Conditions on $w=\omega(P,Q)$ \\ \hline
 $P\ge Q$ & $w$ does not go below the $x$-axis\\
 $Q\ge 0$ & $w$ does not go above the line $y=x$\\
 $-P\le Q$ & $w$ does not go left of the $y$-axis\\
 $h(P)=h(Q)$ & $w$ ends on the $x$-axis\\
% $h(P)=h(Q)=i$ & $w$ ends at $(i,0)$\\
 $h(Q)=i$ & $w$ ends on the line $y=x-i$\\
 $h(P)=i+j$ and $h(Q)=i-j$ & $w$ ends at $(i,j)$\\
 $i-j\le h(Q)\le i+j\le h(P)$ & $w$ ends in $\sh(i,j)$
\end{tabular}
\end{center}
\caption{Correspondences between properties of paths and properties of walks through the bijection $\omega$.}
\label{tab:omega}
\end{table}

Let $\O_n$ be the set of walks of length $n$ constrained to the first octant (that is, the region $x\ge y\ge 0$), and let
$\O_{n,i}$ be the subset of those that end on the line $y=x-i$. 
From Table~\ref{tab:omega}, we see that $\omega$ restricts to a bijection between $\PP_{n,i}$ and $\O_{n,i}$ for every $i$,
and thus, taking the union over all $i$, to a bijection between $\PP_{n}$ and $\O_{n}$.

Let $\Q_n$  be the set of walks of length $n$ constrained to the first quadrant (that is, the region $x,y\ge 0$), and let $\Q^x_n$ be the subset of those that end on the $x$-axis. 
Furthermore, let $\Q^x_{n,i}\subset \Q^x_n$ denote the set of paths that end at $(i,0)$.  From Table~\ref{tab:omega}, we see that $\omega$ restricts to a bijection between $\M_{n,i}$ and $\Q^x_{n,i}$ for every $i$,
and thus to a bijection between $\M_{n}$ and $\Q^x_{n}$.

Let $\H_n$ be the set of walks of length $n$ constrained to the upper half-plane (that is, the region $y\ge 0$), and let
$\H^0_n$ be the subset of those that end at $(\delta_n,0)$. Then $\omega$ restricts to a bijection between $\GG_n$ and $\H^0_n$.

Analogously to how we generalized in Section~\ref{sec:bij} the definitions of sets of nested paths by introducing a parameter $j$, the definitions of the above sets of walks can be extended as well.
To generalize $\O_{n,i}$, we need to introduce the concept of {\em shadow} of a point. For any lattice 
point $(i,j)$ with $i\ge j\ge 0$, its shadow is defined to be the region
$$\sh(i,j)=\{(x,y):i-j\le x-y\le i+j\le x+y\}.$$

\begin{figure}[htb]
  \begin{center}
    \begin{tikzpicture}[scale=0.4]
      \draw[fill,orange!20] (9,7)--(5,3)--(8,0)--(11,3);
      \draw[densely dotted] (0,0) -- (11,0);
      \draw[densely dotted] (0,0) -- (8,8);
      \draw[fill] (5,3) circle (2pt);
      \draw (5,3) node[above] {\small $(i,j)$};
      \draw[orange] (9,7)--(5,3)--(8,0)--(11,3);
	  \draw (8,3) node {\small $\sh(i,j)$};
    \end{tikzpicture}
  \end{center}
  \caption{The shadow of a point $(i,j)$.}\label{fig:sh}
\end{figure}
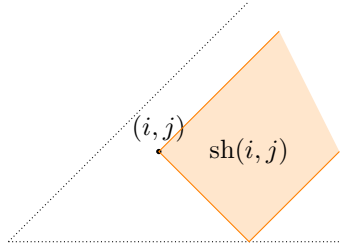

For $i\ge j\ge0$ with $i+j\le n$ and $i+j\equiv n \pmod 2$, define the following sets. Let $\O_{n,i;j}$ be the set of walks in $\O_n$ that end in the region $\sh(i,j)$. 
Let $\Q_{n,i;j}$ be the set of walks in $\Q_n$ that end at the point $(i,j)$. 
Note that $\O_{n,i;0}=\O_{n,i}$ and $\Q_{n,i;0}=\Q^x_{n,i}$.
Let $\H_{n,i;j}$ be the set of walks in $\H_n$ that end at $(\delta_{i},j)$ whose leftmost point lies on the line $x=-\lfloor i/2 \rfloor$.
Note that $\bigcup_i\H_{n,i;0}=\H^0_{n}$, where the union is over all $0\le i\le n$ with $i\equiv n \pmod 2$.

Using Table~\ref{tab:omega}, it is easy to check that $\omega$ gives bijections between $\PP_{n,i;j}$ and $\O_{n,i;j}$, between $\M_{n,i;j}$ and $\Q_{n,i;j}$, and between $\H_{n,i;j}$ and $\GG_{n,i;j}$.
In particular, the bijections in Section~\ref{sec:bij} can be interpreted, via $\omega$, as bijections for lattice walks as shown in Figure~\ref{fig:bijections_ij}, where we write $\bijw:=\omega\circ\bij\circ\omega^{-1}$ and $\bijMGw:=\omega\circ\bijMG\circ\omega^{-1}$.

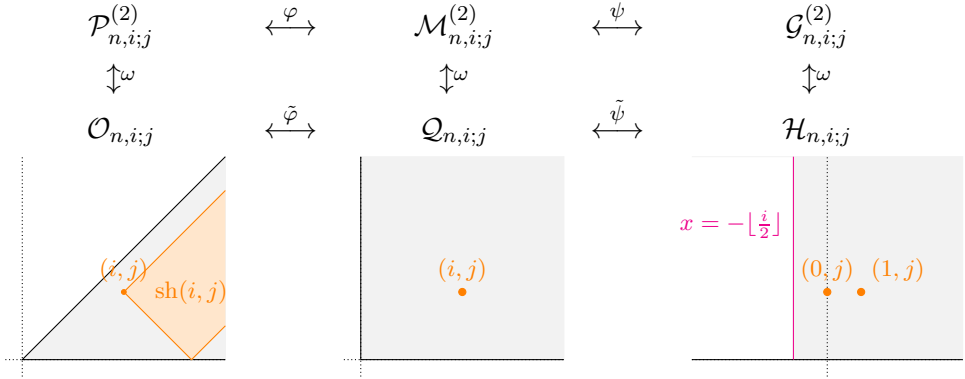
\begin{figure}[htb]
$$\begin{array}{ccccc}
\PP_{n,i;j} &\stackrel{\bij}{\longleftrightarrow} & \M_{n,i;j} & \stackrel{\bijMG}{\longleftrightarrow} & \GG_{n,i;j}
\vspace{5pt} \\
\updownarrow\stackrel{\omega}{} && \updownarrow\stackrel{\omega}{} && \updownarrow\stackrel{\omega}{} \\
\O_{n,i;j} & \stackrel{\bijw}{\longleftrightarrow} & \Q_{n,i;j}  & \stackrel{\bijMGw}{\longleftrightarrow} & \H_{n,i;j}
\vspace{3pt}\\
 \begin{tikzpicture}[scale=0.45]
      \draw[fill,gray!10] (6,6)--(0,0)--(6,0)--(6,6);
      \draw[densely dotted] (-.5,0) -- (6,0); 
      \draw[densely dotted] (0,-.5) -- (0,6);
      \draw (6,6) -- (0,0) -- (6,0);
      \draw[fill,orange!20] (6,5)--(3,2)--(5,0)--(6,1);
      \draw[orange] (6,5)--(3,2)--(5,0)--(6,1);
      \filldraw[orange] (3,2) circle (2pt);
      \draw[orange] (3,2) node[above] {\footnotesize $(i,j)$};
      \draw[orange] (5,2) node {\footnotesize $\sh(i,j)$};
    \end{tikzpicture}     &&
 \begin{tikzpicture}[scale=0.45]
      \draw[fill,gray!10] (0,6)--(0,0)--(6,0)--(6,6)--(0,6);
      \draw[densely dotted] (-.5,0) -- (6,0); 
      \draw[densely dotted] (0,-.5) -- (0,6);
      \draw (0,6) -- (0,0) -- (6,0);
      \filldraw[orange] (3,2) circle (3pt);
      \draw[orange] (3,2) node[above] {\footnotesize $(i,j)$};
    \end{tikzpicture}    &&
\begin{tikzpicture}[scale=0.45]
      \draw[fill,gray!10] (-1,6)--(-1,0)--(4,0)--(4,6)--(-4,6);
      \draw[densely dotted] (-4,0) -- (4,0); 
      \draw[densely dotted] (0,-.5) -- (0,6);
      \draw (-4,0) -- (4,0);
      \draw[magenta] (-1,0) -- (-1,6);
      \draw[magenta] (-1,4) node[left] {\footnotesize $x=-\lfloor\frac{i}{2}\rfloor$};
      \filldraw[orange] (0,2) circle (3pt);
      \filldraw[orange] (1,2) circle (3pt);
      \draw[orange] (0,2) node[above] {\footnotesize $(0,j)$};
      \draw[orange] (1,2) node[above right] {\footnotesize $(1,j)$};
    \end{tikzpicture}
\end{array}$$
  \caption{
The bijections between pairs of lattice paths (top row) and their corresponding lattice walks (bottom row). 
The pictures show the shaded region where walks are constrained to, and the orange region where the walks can end.
In the picture on the right, the leftmost point of the walks must lie on the vertical magenta line.}
\label{fig:bijections_ij}
\end{figure}

Setting $j=0$ and taking the union over all $i$ with $0\le i\le n$ and $i\equiv n\pmod 2$, we obtain the diagram of bijections in Figure~\ref{fig:bijections}. Note that the top row consists of the bijections in Corollary~\ref{cor:PG}.

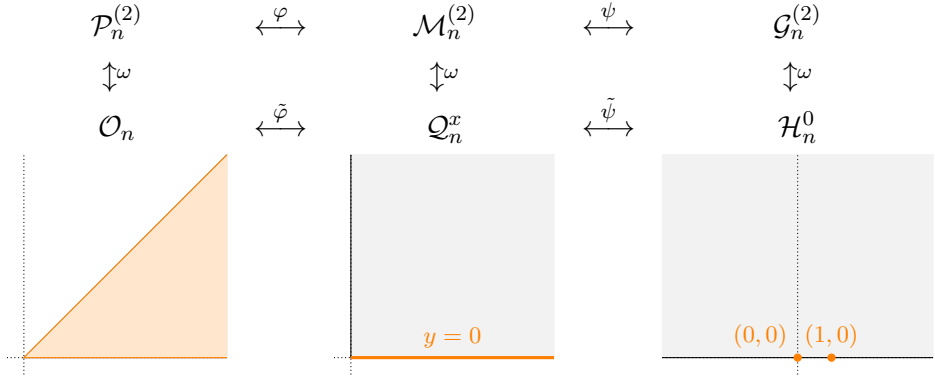
\begin{figure}[htb]
$$\begin{array}{ccccc}
\PP_n & \stackrel{\bij}{\longleftrightarrow} & \M_n & \stackrel{\bijMG}{\longleftrightarrow} & \GG_n 
\vspace{5pt} \\
\updownarrow\stackrel{\omega}{} && \updownarrow\stackrel{\omega}{} && \updownarrow\stackrel{\omega}{} \\
\O_n & \stackrel{\bijw}{\longleftrightarrow} & \Q^x_n & \stackrel{\bijMGw}{\longleftrightarrow} & \H^0_n
\vspace{3pt}\\
\begin{tikzpicture}[scale=0.45]
      \draw[fill,gray!10] (6,6)--(0,0)--(6,0)--(6,6);
      \draw[densely dotted] (-.5,0) -- (6,0); 
      \draw[densely dotted] (0,-.5) -- (0,6);
      \draw (6,6) -- (0,0) -- (6,0);
      \draw[fill,orange!20] (6,6)--(0,0)--(6,0)--(6,6);
      \draw[orange] (6,6) -- (0,0) -- (6,0);
\end{tikzpicture} & &
\begin{tikzpicture}[scale=0.45]
      \draw[fill,gray!10] (0,6)--(0,0)--(6,0)--(6,6)--(0,6);
      \draw[densely dotted] (-.5,0) -- (6,0); 
      \draw[densely dotted] (0,-.5) -- (0,6);
      \draw (0,6) -- (0,0) -- (6,0);
      \draw[orange, very thick] (0,0) -- (6,0);
      \draw[orange] (3,0) node[above] {\footnotesize $y=0$};
    \end{tikzpicture} & &
\begin{tikzpicture}[scale=0.45]
      \draw[fill,gray!10] (-4,6)--(-4,0)--(4,0)--(4,6)--(-4,6);
      \draw[densely dotted] (-4,0) -- (4,0); 
      \draw[densely dotted] (0,-.5) -- (0,6);
      \draw (-4,0) -- (4,0);
      \filldraw[orange] (0,0) circle (3pt);
      \filldraw[orange] (1,0) circle (3pt);
      \draw[orange] (0,0) node[above left] {\footnotesize $(0,0)$};
      \draw[orange] (1,0) node[above] {\footnotesize $(1,0)$};
    \end{tikzpicture}
\end{array}$$
  \caption{The corresponding bijections for the case of $j=0$ and arbitrary $i$.}
\label{fig:bijections}
\end{figure}

\subsection{The bijections in terms of walks}\label{sec:bijwalks}
Next we describe the bijections $\bijw$ and $\bijMGw$ in the bottom row of Figures~\ref{fig:bijections_ij} and~\ref{fig:bijections} directly in terms of walks.

We start with $\bijw:\Q_{n,i;j}\to\O_{n,i;j}$. Given $w\in\Q_{n,i;j}$, its image $w''=\bijw(w)$ is constructed as follows:

\begin{enumerate}
\item Let $w'$ be the path obtained by reflecting along a line of slope $1$ (that is, switching $N$ with $E$
and $S$ with $W$) all the steps of $w$ that end in the region $y>x$.

\item Let $w''$ be the path obtained from $w'$ by changing into $N$ steps all the $S$ steps of $w'$ that end at a lower $y$-coordinate than all the previous steps of $w'$.
\end{enumerate}

It is easy to check that this definition of $\bijw$ is equivalent to the definition of $\bij$ given in Section~\ref{sec:bij}. Figure~\ref{fig:bijw} gives an example of this construction, which is the translation to walks of the example given in Figure~\ref{fig:bij} for paths.

\begin{figure}[!ht]
  \begin{center}
  \begin{tabular}{l}
      \begin{tikzpicture}[scale=1]
    \def\E{(1,0)}     \def\W{(-1,0)}     \def\N{(0,1)}     \def\S{(0,-1)}
      \draw[densely dotted] (0,0)--(7,0);
      \draw[densely dotted] (0,0)--(0,4);
      \draw[dotted] (0,0)--(4,4);
      \draw[purple,thick] (0,0) coordinate(d0)
      -- ++\N coordinate(d1)
      -- ++\E coordinate(d2)
      -- ++(0,0.05)
      -- ++\W coordinate(d3)
      -- ++\N coordinate(d4)
      -- ++\E coordinate(d5)
      -- ++\E coordinate(d6)
      -- ++\E  coordinate(d7)
      -- ++\S coordinate(d8)
      -- ++\E coordinate(d9)
      -- ++\E  coordinate(d10)
      -- ++\E  coordinate(d11)
      -- ++(0,-0.05)
      -- ++\W coordinate(d12)
      -- ++\S++(0,0.05) coordinate(d13)
      -- ++\E coordinate(d14)
      -- ++(0,-0.05)
      -- ++\W  coordinate(d15)
      -- ++\W coordinate(d16)      
      -- ++\W++(0.05,0)  coordinate(d17)
      -- ++\N coordinate(d18)
      -- ++(-0.05,0)      
      -- ++\S  coordinate(d19)
      -- ++(-0.05,0)      
      -- ++\N coordinate(d20)      
      -- ++\E++(0.1,0) coordinate(d21)
      ;
       \foreach \x in {0,1,2,4,5,6,7,8,10,11,13,14,16,17} {
        \filldraw (d\x) circle (1.5pt);
      }
      \filldraw (d21)  ++(-3pt,-3pt) rectangle ++(5pt,5pt);
      \foreach \x in {0,...,20} {
       \pgfmathparse{\x+1}
       \xdef\y{\pgfmathresult}
        \draw[->,purple,thick] (d\x) -- (d\y);
      }
      \foreach \x in {0,3,7,19} {
       \pgfmathtruncatemacro\y{\x+1}
       \draw ($(d\x)!0.5!(d\y)$) node[left] {\footnotesize $\y$};
      }
      \foreach \x in {12,17} {
       \pgfmathtruncatemacro\y{\x+1}
       \draw ($(d\x)!0.5!(d\y)$) node[right] {\footnotesize $\y$};
      }
      \foreach \x in {1,11,14,15,16,18,20} {
       \pgfmathtruncatemacro\y{\x+1}
       \draw ($(d\x)!0.5!(d\y)$) node[below] {\footnotesize $\y$};
      }
      \foreach \x in {2,4,5,6,8,9,10,13} {
       \pgfmathtruncatemacro\y{\x+1}
       \draw ($(d\x)!0.5!(d\y)$) node[above] {\footnotesize $\y$};
      }
	  \draw[purple] (1,2.5) node {$w$};
      \draw[->,thick,dotted,yellow] (d0)--(d1);
      \draw[->,thick,dotted,yellow] (d2)+(0,0.05)--(d3);
      \draw[->,thick,dotted,yellow] (d3)--(d4);
      \draw[->,thick,dotted,yellow] (d4)--(d5);
      \draw (4,1) node[above] {\small $(i,j)$};
	  \draw (1,4) node {$\Q_{n,i;j}$};
      \draw (4,-1) node {$\updownarrow$};
\end{tikzpicture}\\
    \begin{tikzpicture}[scale=1]
    \def\E{(1,0)}     \def\W{(-1,0)}     \def\N{(0,1)}     \def\S{(0,-1)}
      \draw[densely dotted] (0,0)--(9,0);
      \draw[densely dotted] (0,0)--(0,2);
      \draw[dotted] (0,0)--(2,2);
      \draw[purple,thick] (0,0) coordinate(d0)
      -- ++\E coordinate(d1)
      -- ++\E coordinate(d2)
      -- ++\S coordinate(d3)
      -- ++\E coordinate(d4)
      -- ++\N coordinate(d5)
      -- ++\E coordinate(d6)
      -- ++\E  coordinate(d7)
      -- ++\S coordinate(d8)
      -- ++\E coordinate(d9)
      -- ++\E  coordinate(d10)
      -- ++\E  coordinate(d11)
      -- ++(0,-0.05)
      -- ++\W coordinate(d12)
      -- ++\S++(0,0.05) coordinate(d13)
      -- ++\E coordinate(d14)
      -- ++(0,-0.05)
      -- ++\W  coordinate(d15)
      -- ++\W coordinate(d16)      
      -- ++\W++(0.05,0)  coordinate(d17)
      -- ++\N coordinate(d18)
      -- ++(-0.05,0)      
      -- ++\S  coordinate(d19)
      -- ++(-0.05,0)      
      -- ++\N coordinate(d20)      
      -- ++\E++(0.1,0) coordinate(d21)
      ;
       \foreach \x in {0,1,2,3,4,5,6,7,8,10,11,13,14,16,17} {
        \filldraw (d\x) circle (1.5pt);
      }
      \filldraw (d21)  ++(-3pt,-3pt) rectangle ++(5pt,5pt);
      \foreach \x in {0,...,20} {
       \pgfmathparse{\x+1}
       \xdef\y{\pgfmathresult}
        \draw[->,purple,thick] (d\x) -- (d\y);
      }
      \foreach \x in {2,4,7,19} {
       \pgfmathtruncatemacro\y{\x+1}
       \draw ($(d\x)!0.5!(d\y)$) node[left] {\footnotesize $\y$};
      }
      \draw (0,1) node[left] {$\ $};
      \foreach \x in {12,17} {
       \pgfmathtruncatemacro\y{\x+1}
       \draw ($(d\x)!0.5!(d\y)$) node[right] {\footnotesize $\y$};
      }
      \foreach \x in {11,14,15,16,18,20} {
       \pgfmathtruncatemacro\y{\x+1}
       \draw ($(d\x)!0.5!(d\y)$) node[below] {\footnotesize $\y$};
      }
      \foreach \x in {0,1,3,5,6,8,9,10,13} {
       \pgfmathtruncatemacro\y{\x+1}
       \draw ($(d\x)!0.5!(d\y)$) node[above] {\footnotesize $\y$};
      }
	  \draw[purple] (1,0.5) node {$w'$};
      \draw[->,ultra thick,green] (d2)--(d3);
      \draw[->,ultra thick,green] (d12)--(d13);
      \draw[->,thick,dotted,yellow] (d0)--(d1);
      \draw[->,thick,dotted,yellow] (d2)--(d3);
      \draw[->,thick,dotted,yellow] (d3)--(d4);
      \draw[->,thick,dotted,yellow] (d4)--(d5);
      \filldraw (4,1) circle (1pt);
	  \draw (4,1) node[above] {\small $(i,j)$};
      \draw (4,-2.5) node {$\updownarrow$};
\end{tikzpicture}\\
    \begin{tikzpicture}[scale=1]
    \def\E{(1,0)}     \def\W{(-1,0)}     \def\N{(0,1)}     \def\S{(0,-1)}
      \draw[fill,orange!20] (7,4)--(4,1)--(5,0)--(8,3);
      \draw[densely dotted] (0,0)--(9,0);
      \draw[densely dotted] (0,0)--(0,4.5);
      \draw[dotted] (0,0)--(4.5,4.5);
      \draw[orange] (7,4)--(4,1)--(5,0)--(8,3);
	  \draw (7,3) node {\small $\sh(i,j)$};      
      
      \draw[purple,thick] (0,0) coordinate(d0)
      -- ++\E coordinate(d1)
      -- ++\E coordinate(d2)
      -- ++\N coordinate(d3)
      -- ++\E coordinate(d4)
      -- ++\N coordinate(d5)
      -- ++\E coordinate(d6)
      -- ++\E  coordinate(d7)
      -- ++\S coordinate(d8)
      -- ++\E coordinate(d9)
      -- ++\E  coordinate(d10)
      -- ++\E  coordinate(d11)
      -- ++(0,+0.05)
      -- ++\W coordinate(d12)
      -- ++\N++(0,-0.05) coordinate(d13)
      -- ++\E coordinate(d14)
      -- ++(0,0.05)
      -- ++\W  coordinate(d15)
      -- ++\W coordinate(d16)      
      -- ++\W++(0.05,0)  coordinate(d17)
      -- ++\N coordinate(d18)
      -- ++(-0.05,0)      
      -- ++\S  coordinate(d19)
      -- ++(-0.05,0)      
      -- ++\N coordinate(d20)      
      -- ++\E++(0.1,0) coordinate(d21)
      ;
       \foreach \x in {0,1,2,3,4,5,6,7,8,10,11,13,14,16,17} {
        \filldraw (d\x) circle (1.5pt);
      }
      \filldraw (d21)  ++(-3pt,-3pt) rectangle ++(5pt,5pt);
      \foreach \x in {0,...,20} {
       \pgfmathparse{\x+1}
       \xdef\y{\pgfmathresult}
        \draw[->,purple,thick] (d\x) -- (d\y);
      }
      \foreach \x in {2,4,7,19} {
       \pgfmathtruncatemacro\y{\x+1}
       \draw ($(d\x)!0.5!(d\y)$) node[left] {\footnotesize $\y$};
      }
      \draw (0,1) node[left] {$\ $};
      \foreach \x in {12,17} {
       \pgfmathtruncatemacro\y{\x+1}
       \draw ($(d\x)!0.5!(d\y)$) node[right] {\footnotesize $\y$};
      }
      \foreach \x in {3,5,6,8,9,10,13,18} {
       \pgfmathtruncatemacro\y{\x+1}
       \draw ($(d\x)!0.5!(d\y)$) node[below] {\footnotesize $\y$};
      }
      \foreach \x in {0,1,11,14,15,16,20} {
       \pgfmathtruncatemacro\y{\x+1}
       \draw ($(d\x)!0.5!(d\y)$) node[above] {\footnotesize $\y$};
      }
	  \draw[purple] (3.5,2.5) node {$w''$};
      \draw[->,ultra thick,green] (d2)--(d3);
      \draw[->,ultra thick,green] (d12)--(d13);
      \filldraw (4,1) circle (1pt);
	  \draw (4,1) node[above] {\small $(i,j)$};
	  \draw (1,4) node {$\O_{n,i;j}$};
\end{tikzpicture}
  \end{tabular}
  \end{center}
  \caption{The bijection $\bijw:\Q_{n,i;j}\to\O_{n,i;j}$. The steps that are changed in going between $w$ and $w'$ are dotted in yellow. The steps that changed in going between $w'$ and $w''$ are thicker and green.}\label{fig:bijw}
\end{figure}
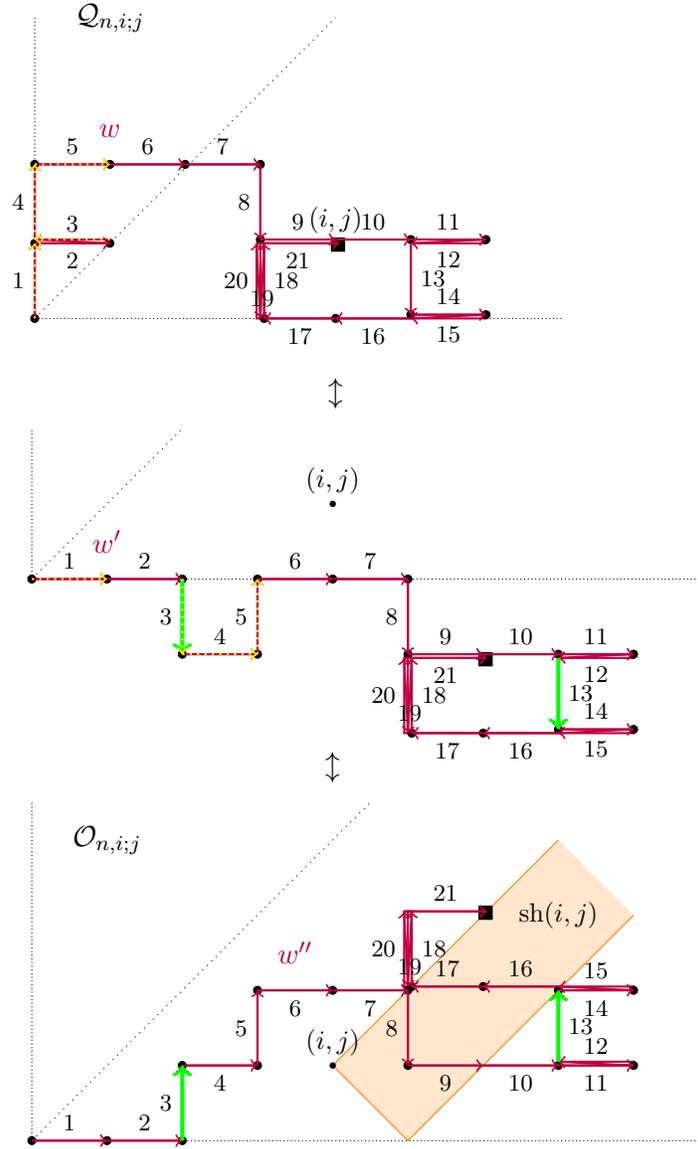

\medskip

Next we describe the bijection $\bijMGw:\Q_{n,i;j}\to\H_{n,i;j}$. A walk can be decomposed uniquely as a sequence of (vertical) $N$ and $S$ steps interleaved with a sequence of (horizontal) $E$ and $W$ steps. 

A lattice walk belongs to $\Q^x_n$ if and only if its $NS$-subsequence, after substituting $U$ for $N$ and $D$ for $S$, is a Dyck path, and its $EW$-subsequence, after substituting $U$ for $E$ and $D$ for $W$, is a Dyck path prefix.
Similarly, a walk belongs to $\H^0_n$ if and only if its $NS$-subsequence produces a Dyck path, and its $EW$-subsequence produces a Grand Dyck path. 

The bijection $\bijMGw:\Q^x_{n}\to\H^0_{n}$ has a simple description in terms of this decomposition. Given $w\in\Q^x_n$, consider the Dyck path prefix determined by its $EW$-subsequence, and apply $\xi$ to it to obtain a Grand Dyck path  of the same length (with steps $E,W$ playing the role of $U,D$). The resulting walk belongs to $\H^0_n$ because it consists of a Dyck path interleaved with a Grand Dyck path.
More generally, a similar description can be given for $\bijMGw:\Q_{n,i;j}\to\H_{n,i;j}$. The only difference is that the $NS$-subsequence gives a Dyck path prefix ending at height $j$, and the $EW$-subsequence is mapped via $\xi$ from a Dyck path prefix ending at height $i$ to a Grand Dyck path with lowest point at height $-\lfloor i/2 \rfloor$.

\section{Connections to work in the literature}\label{sec:related}
In this section we discuss related results in the literature, and we show how our work applies in different contexts.

\subsection{Plane partitions}
A non-bijective proof of the fact that $|\PP_n|=|\GG_n|$, which we proved bijectively in Corollary~\ref{cor:PG},
and more generally of Equation~\ref{eq:PG}, follows from a result of Proctor~\cite{Proctor} on plane partitions. 
Recall that a plane partition is a two-dimensional array of nonnegative integers $a_{i,j}$ weakly decreasing in rows and columns.
%, that is, $a_{i,j}\ge a_{i+1,j}$ and $a_{i,j}\ge a_{i,j+1}$. 
In the rest of this section we assume that $p\ge q$.
A plane partition is said to be contained in the rectangle shape $(p^q)$ if the range for the indices is $1\le i\le q$ and $1\le j\le p$, and contained in the shifted shape $[p + q - 1,p + q - 3,\dots, p - q + 1]$ if the range is
$1\le i\le q$ and $i\le j\le p+q-i$. A plane partition has part size bounded by $k$ if $0\le a_{i,j}\le k$ for all $i,j$.

It is easy to see that tuples of paths in $\A^{(k)}_{p+q}$ ending at $(p+q,p-q)$ are in bijection with plane partitions contained in the rectangle shape $(p^q)$ with part size bounded by~$k$. The idea is that for each $1\le i\le k$, the boundary between entries smaller than $i$ and entries larger than or equal to $i$ in the array determines a path in the $k$-tuple.
Similarly, tuples
of paths in $\P^{(k)}_{p+q}$ ending at height at least $p-q$ are in bijection with
(shifted) plane partitions contained in the shifted shape $[p + q - 1,p + q - 3,\dots, p - q + 1]$ with part size bounded by $k$.
The main result in~\cite{Proctor} is that these two sets of plane partitions have the same cardinality.
The proof uses combinatorial descriptions of finite-dimensional representations of semisimple Lie algebras, and it is not bijective.

Using the above correspondences between tuples of paths and plane partitions, Corollary~\ref{cor:PG} gives a bijective proof of Proctor's result for $k=2$ and $p=q$. In fact, we can use a slight modification of $\bijMGw$ to get rid of the restriction $p=q$ and provide a bijective proof of Proctor's result for $k=2$ in a somewhat more general form. In terms of pairs of nested paths, letting $n=p+q$ and $s=p-q$, Proctor's result for $k=2$ states that, for any $s\equiv n\pmod n$, the number of pairs
$(P,Q)\in\PP_n$ with $h(Q)\ge s$ equals the number of pairs $(P,Q)\in\AA_n$ with $h(P)=h(Q)=s$. When translated in terms of walks using $\omega$, it states that the number of walks in $\O_n$ ending in the region $y\le x-s$ equals
the number of walks in $\H_n$ ending at $(s,0)$. 

First we modify the bijection $\xi:\P_n\to\G_n$ by introducing a parameter $s\ge0$ as follows. 
Given a path $P\in\P_n$ with $h(P)=i$, where $i\ge s$ and $i\equiv s\pmod 2$, let $\xi_s(P)$ 
be the path obtained by changing the leftmost $(i-s)/2$ unmatched $U$ steps of $P$ into $D$ steps.
It is clear that $\xi_s(P)$ ends at height $s$. The same argument that proves that $\xi$ is a bijection
between $\P_n$ and $\G_n$ shows that, for any $i$ as above, the map $\xi_s$ is a bijection between paths in $\P_n$ ending at height $i$ and paths in $\A_n$ ending at height $s$ whose lowest point is at height $-(i-s)/2$. 

By modifying $\bijMGw$ accordingly, we define a map $\bijMGw_s$ as follows. Let $j$ and $n$ be such that $s+j\equiv n\pmod 2$.
Given a walk $w\in\Q_{n,i;j}$, where $i\ge s$ and $i\equiv s\pmod 2$, apply $\xi_s$ to its $EW$-subsequence (with steps $E,W$ playing the role of $U,D$). The resulting walk $\bijMGw_s(w)$ ends at $(s,j)$, and its leftmost point has $x$-coordinate  $-(i-s)/2$. In fact, $\bijMGw_s$ is a bijection between $\Q_{n,i;j}$ and the set of walks in $\H_n$ ending at $(s,j)$ whose leftmost point lies on  $x=-(i-s)/2$.
Taking the union over all $i$ with $i\ge s$ and $i\equiv s\pmod 2$, the map $\bijMGw_s$ gives a bijection between $\bigcup_{i\ge s}\Q_{n,i;j}$ and the set of walks in $\H_n$ ending at $(s,j)$.
Thus, the composition $\bijMGw_s\circ\bijw^{-1}$ is a bijection between the disjoint union $\bigsqcup_{i\ge s}\O_{n,i;j}$ (note that in general the sets $\O_{n,i;j}$ are not disjoint) and the set of walks in $\H_n$ ending at $(s,j)$.

Restricted to the case $j=0$, our modified map $\bijMGw_s\circ\bijw^{-1}$ is a bijection between walks in $\O_n$ ending in $y\le x-s$ and walks in $\H_n$ ending at $(s,0)$, as illustrated in Figure~\ref{fig:proctor}, proving Proctor's result for $k=2$.

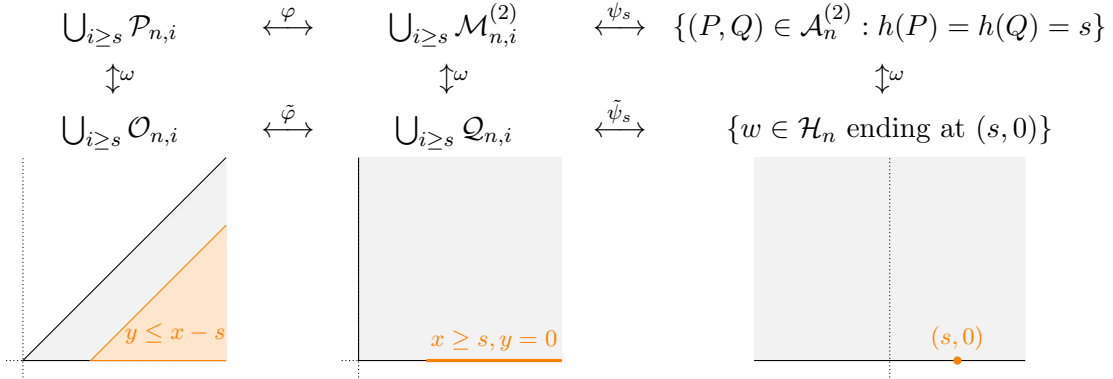
\begin{figure}[htb]
$$\begin{array}{ccccc}
%\{(P,Q)\in\PP_n:h(Q)\ge s\}=
\bigcup_{i\ge s}\P_{n,i} & \stackrel{\bij}{\longleftrightarrow} & 
%\{(P,Q)\in\M_n:h(Q)\ge s\}=
\bigcup_{i\ge s}\M_{n,i} & \stackrel{\bijMG_s}{\longleftrightarrow} & \{(P,Q)\in\AA_n: h(P)=h(Q)=s\}
\vspace{5pt} \\
\updownarrow\stackrel{\omega}{} && \updownarrow\stackrel{\omega}{} && \updownarrow\stackrel{\omega}{} \\
\bigcup_{i\ge s}\O_{n,i} & \stackrel{\bijw}{\longleftrightarrow} & \bigcup_{i\ge s}\Q_{n,i} & \stackrel{\bijMGw_s}{\longleftrightarrow} & \{w\in\H_n\text{ ending at }(s,0)\}
\vspace{3pt}\\
\begin{tikzpicture}[scale=0.45]
      \draw[fill,gray!10] (6,6)--(0,0)--(6,0)--(6,6);
      \draw[densely dotted] (-.5,0) -- (6,0); 
      \draw[densely dotted] (0,-.5) -- (0,6);
      \draw (6,6) -- (0,0) -- (6,0);
      \draw[fill,orange!20] (6,4)--(2,0)--(6,0)--(6,4);
      \draw[orange] (6,4) -- (2,0) -- (6,0);
      \draw[orange] (4.5,.8) node {\footnotesize $y\le x-s$};
\end{tikzpicture} & &
\begin{tikzpicture}[scale=0.45]
      \draw[fill,gray!10] (0,6)--(0,0)--(6,0)--(6,6)--(0,6);
      \draw[densely dotted] (-.5,0) -- (6,0); 
      \draw[densely dotted] (0,-.5) -- (0,6);
      \draw (0,6) -- (0,0) -- (6,0);
      \draw[orange, very thick] (2,0) -- (6,0);
      \draw[orange] (4,0) node[above] {\footnotesize $x\ge s,y=0$};
    \end{tikzpicture} & &
\begin{tikzpicture}[scale=0.45]
      \draw[fill,gray!10] (-4,6)--(-4,0)--(4,0)--(4,6)--(-4,6);
      \draw[densely dotted] (-4,0) -- (4,0); 
      \draw[densely dotted] (0,-.5) -- (0,6);
      \draw (-4,0) -- (4,0);
      \filldraw[orange] (2,0) circle (3pt);
      \draw[orange] (2,0) node[above] {\footnotesize $(s,0)$};
    \end{tikzpicture}
\end{array}$$
  \caption{The bijections proving Proctor's result for $k=2$.}
\label{fig:proctor}
\end{figure}

\subsection{Partially ordered sets}

In Stanley's book \cite[Exercise 3.47(f)]{EC1} we find another appearence of Proctor's result in a slightly different form. This exercise asks to show that the order polynomials of two posets are the same. The first poset is
the product of two chains of length $q$ and $p$, and the second is the poset of pairs
$\{(i, j): 1 \le i \le j \le p+q - i, 1 \le i \le q\}$
ordered by $(i,j)\le (i',j')$ if $i\le i'$ and $j\le j'$.
Recall that the order polynomial of a poset evaluated at $k$ is the number of order-preserving maps from the poset to a $k$-element chain. Such maps correspond to the plane partitions considered by Proctor.
In~\cite{Proctor}, Proctor also shows that both posets have the same number of $l$-element chains for all $l$ (equivalently, they have the same zeta-polynomials). A different proof of this fact using symmetric functions is also given by Stembridge~\cite{Stembridge}.

Very closely related to Proctor's result for plane partitions is the following result of Haiman. In~\cite[Prop. 8.11]{Haiman}, Haiman gives a bijection between {\em standard} Young tableaux of rectangular shape $(p^q)$ and {\em standard} Young tableaux of shifted shape $[p + q - 1,p + q - 3,\dots, p - q + 1]$. His bijection is based on Sh\"utzenberger's {\it jeu de taquin}~\cite{Sch}. The fact that these shapes have the same number of standard Young tableaux follows from Proctor's result. However, it is not clear whether it is possible to generalize Haiman's bijection using {\it jeu de taquin} to plane partitions.

\subsection{Watermelons and stars with a wall}

Tuples of paths in $\A^{(k)}_{p+q}$ ending at $(p+q,p-q)$ are called \emph{watermelons} in~\cite{GOV}, where they are enumerated  using a correspondence between watermelons and certain semistandard Young tableaux. A determinantal formula
can also be obtained using the Gessel--Viennot method~\cite{GV}. Alternatively, since these tuples are in bijection with plane partitions contained in the rectangle shape $(p^q)$ with part size bounded by $k$ (or, as they are commonly described,
plane partitions that fit in a $p\times q\times k$ box), their number is given by the following formula of MacMahon~\cite{MacMahon} (see also \cite{Macdonald}):
\begin{equation}\label{eq:MacMahon}
\prod_{i=1}^p\prod_{j=1}^q\prod_{l=1}^k \frac{i+j+l-1}{i+j+l-2},
\end{equation}
which we used to obtain the right-hand side of~\eqref{eq:Gk}.

Tuples of nested lattice paths of the same length (elements of $\A^{(k)}_n$ in our terminology) are called \emph{stars} in~\cite{GOV,KGV}. Krattenthaler, Guttmann and Viennot~\cite{KGV} consider stars with a {\em wall restriction}, and they show~\cite[Theorem 7]{KGV} that the number of tuples in $\P^{(k)}_{p+q}$ ending at height at least $p-q$ is
also given by Equation~\eqref{eq:MacMahon}.
Their proof, which is based on Proctor's proof, uses a correspondence between these tuples of paths and symplectic tableaux 
(that is, semistandard Young tableau where entries in row $r$ are at least $2r-1$ for all $r$)
with entries bounded by $p+q-1$ having at most $p$ rows and at most $k$ columns. The proof then follows from an identity
relating symplectic characters and Schur functions of rectangular shape, which is a special case of an identity for universal characters.

\subsection{Walks in the octant}
Walks in the first octant with specific endpoints have been enumerated by Bousquet-M\'elou and Mishna~\cite[Section 5.3]{BMM} using functional equations and the kernel method. The walks studied in~\cite{BMM} are walks in the first quadrant with slightly different types of steps, but they are trivially equivalent to ours through a linear transformation.

Bousquet-M\'elou and Mishna give a formula for the number of walks in $\O_n$ with a given endpoint, and from it, using Gosper's algorithm to sum hypergeometric sequences, they deduce the following closed expressions, which we write in terms of the Catalan numbers $C_m$:
\begin{align}|\{w\in \O_n: w\text{ ends on the $x$-axis}\}|&=\begin{cases} C_m C_{m+1} & \text{if }n=2m, \\
C_{m+1}^2 & \text{if }n=2m+1, \end{cases} \label{eq:Oxaxis}\\ 
|\{w\in \O_{2m}: w\text{ ends on $y=x$}\}|&=|\O_{2m,0}|=C_m C_{m+1}, \label{eq:Odiag}\\
|\O_n|&=\begin{cases} (2m+1)C_m^2 & \text{if }n=2m, \\
(2m+1)C_mC_{m+1} & \text{if }n=2m+1. \end{cases} \label{eq:O}
\end{align}

Of course, since $|\O_n|=|\PP_n|=|\GG_n|$, we know that the formula~\eqref{eq:O} agrees with the two expressions in Equation~\eqref{eq:Gk} for $k=2$, and also with the formula
$$|\GG_n|=|\H^0_n|=\sum_{l=0}^{\fn2} \binom{n}{2l} |\D_{2l}| |\G_{n-2l}| 
%= \sum_{l=0}^{\fn2}  \binom{n}{2l} \frac{1}{l+1}\binom{2l}{l} \binom{n-2l}{\fn2-l}\\
=\sum_{l=0}^{\fn2} \frac{1}{l+1}\binom{n}{l,l,\fn2-l,\cn2-l},
$$
which follows from the decomposition of walks in $\H^0_n$ obtained by separating their $NS$- and $EW$-subsequences, as described in the Section~\ref{sec:bijwalks}.

The nice product of Catalan numbers in Equation~\eqref{eq:Oxaxis} was first discovered by Gouyou-Beau\-champs~\cite{G-B86}, who gave a combinatorial proof of this formula. Interpreting walks ending on the $x$-axis, via $\omega$, as pairs of Dyck path prefixes having the same endpoint, he first expresses their number as a sum over all possible endpoints.
For each given endpoint, non-crossing pairs of paths are easily counted using the Gessel--Viennot method~\cite{GV}, by subtracting crossing pairs from all pairs, for which there are simple formulas. The sums over all possible endpoints are then interpreted as concatenations of paths, and finally a clever involution is applied to cancel positive and negative terms and obtain the formula~\eqref{eq:Oxaxis}. 
Additionally, Gouyou-Beauchamps~\cite{G-B89} provides a bijection between walks in $\O_n$ ending on the $x$-axis and standard Young tableaux with $n$ cells having at most 4 rows.

On the other hand, it seems that no combinatorial proof of Equation~\eqref{eq:Odiag} is known. This equation counts walks ending on the diagonal, or equivalently, via $\omega$,
pairs of nested paths $(P,Q)$ where $P\in\P_{2m}$ and $Q\in\D_{2m}$. In \cite[Section 7.1]{BMM}, Bousquet-M\'elou and Mishna leave open the question of
finding a bijective proof of the fact that expressions~\eqref{eq:Oxaxis} and~\eqref{eq:Odiag} agree for even $n$, that is, that the number of walks in $\O_{n}$ ending on the $x$-axis equals the number of those ending on the diagonal. Next we provide a bijective proof of Equation~\eqref{eq:Odiag}.

\begin{corollary}\label{cor:diagonal}
There is an explicit bijection between the set $\O_{2m,0}$ of walks in the first octant ending on the diagonal and the set $\D_{2m}\times\D_{2(m+1)}$ of pairs of Dyck paths.
\end{corollary}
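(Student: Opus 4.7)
My plan is to compose the bijections from Sections~\ref{sec:bij} and~\ref{sec:walks} with an explicit combinatorial construction. Given $w\in\O_{2m,0}$, I first apply $\bijw^{-1}$ to obtain a walk $w'\in\Q^x_{2m,0}$ of length $2m$ in the first quadrant that ends at the origin. As described at the end of Section~\ref{sec:bijwalks}, $w'$ decomposes uniquely as an interleaving of its $NS$-subsequence---a Dyck path $A\in\D_{2a}$, where $N$ is read as $U$ and $S$ as $D$---and its $EW$-subsequence---a Dyck path $B\in\D_{2b}$, where $E$ is read as $U$ and $W$ as $D$---with $a+b=m$. This gives
\[
|\Q^x_{2m,0}|=\sum_{a+b=m}\binom{2m}{2a}C_aC_b=C_mC_{m+1},
\]
the latter equality being straightforward to verify algebraically (for example by generating functions).

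The main task is then to exhibit an explicit bijection realizing this identity. I propose to take the first output $D_1\in\D_{2m}$ to be the projection of $w'$ onto the $x+y$-axis (mapping $N,E\to U$ and $S,W\to D$); this is always a Dyck path of length $2m$ since $w'$ stays in the first quadrant and ends at the origin. Equivalently, via $\omega^{-1}$, the walk $w'$ corresponds to a pair $(P,Q)\in\M_{2m,0}$ with $P=D_1\in\D_{2m}$ and $Q\in\G_{2m}$ a Grand Dyck path satisfying $|Q|\le P$ at every point.

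The hard part is constructing the second Dyck path $D_2\in\D_{2(m+1)}$ from $(P,Q)$ so that the resulting map is bijective. My plan is to write $Q$ in terms of $(|Q|,\epsilon)$, where $|Q|\in\D_{2m}$ is a Dyck path with $|Q|\le P$ and $\epsilon$ records the sign of each excursion of $|Q|$ above the $x$-axis, and then to combine the triple $(P,|Q|,\epsilon)$ into $D_2$ via a matching/flipping construction analogous to those used in $\xi$ from the Introduction and in $\bij$ from Section~\ref{sec:bij}. The technical verification that this yields a bijection onto $\D_{2(m+1)}$---that every such Dyck path arises from a unique triple $(P,|Q|,\epsilon)$ with $|Q|\le P$---parallels the argument used to establish Theorem~\ref{thm:bijMP} and is the delicate part of the proof.
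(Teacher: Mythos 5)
Your first step coincides with the paper's: $\bijw$ restricts to a bijection between $\O_{2m,0}$ and $\Q^x_{2m,0}$, reducing the corollary to a bijection between $\Q^x_{2m,0}$ and $\D_{2m}\times\D_{2(m+1)}$. At that point the paper does not construct anything new; it invokes the known bijections of Cori--Dulucq--Viennot \cite{CDV} (recursive) and Bernardi \cite{Ber} (via tree-rooted maps). You instead sketch a fresh construction, and the sketch has a genuine gap: the map $(P,\abs[Q],\epsilon)\mapsto D_2$ is never defined, and ``a matching/flipping construction analogous to $\xi$'' whose verification ``parallels Theorem~\ref{thm:bijMP}'' is precisely the hard content being asserted, not proved. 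The identity $\sum_{a+b=m}\binom{2m}{2a}C_aC_b=C_mC_{m+1}$ is true but does not by itself produce a bijection; realizing it bijectively is the shuffle-of-parenthesis-systems problem that \cite{CDV} was written to solve.

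Worse, the specific design choice of taking $D_1=P$ cannot be completed to a bijection onto $\D_{2m}\times\D_{2(m+1)}$ at all. For that to work, for each fixed $P\in\D_{2m}$ the fiber $\{Q\in\G_{2m}: -P\le Q\le P\}$ would have to have cardinality $C_{m+1}$ independently of $P$, since it must map bijectively onto $\D_{2(m+1)}$. It does not: for $m=2$, the path $P=UUDD$ admits $6$ such $Q$ while $P=UDUD$ admits $4$, and $C_3=5$. The totals sum to $C_2C_3=10$ as they must, but no fiber has the right size, so no construction of $D_2$ can make your map bijective. This is exactly why the known bijections do not output $P$ as one of the two Dyck paths. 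To repair the argument, either cite \cite{CDV} or \cite{Ber} for the second step, as the paper does, or abandon the constraint $D_1=P$.
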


\begin{proof}
Our bijection $\bijw:\O_{2m}\to\Q^x_{2m}$ restricts to a bijection between $\O_{2m,0}$ and $\Q^x_{2m,0}$, the set of walks in the first quadrant of length $2m$ that end at the origin. Composing $\bijw$ with any of the known bijections between $\Q^x_{2m,0}$ and pairs of Dyck paths provides the desired bijection.
The first bijection between $\Q^x_{2m,0}$ and $\D_{2m}\times\D_{2(m+1)}$ was constructed recursively by Cori, Dulucq and Viennot~\cite{CDV}, and later a more direct bijection passing through certain planar maps was given by Bernardi~\cite{Ber}. 

We point out that Guy, Krattenthaler and Sagan~\cite{GKS} gave another simple proof of the fact that $|\Q^x_{2m,0}|=C_m C_{m+1}$ using the reflection principle, and thus involving negative signs.
\end{proof}

\section{Proof of Theorem~\ref{thm:bijMP}}\label{sec:proofs}

The proof of Theorem~\ref{thm:bijMP} will follow from Lemmas~\ref{lem:image}, \ref{lem:injective} and~\ref{lem:surjective} below. It will be convenient to introduce some notation for the proofs, and to refer to the example in Figure~\ref{fig:bij}. 
Let $\R$ be the set of $U$ steps of $Q$ that end on the $x$-axis, which we call {\em lower returns} of $Q$, and let $r=|\R|$. 
Let $\abs[Q]$ be the path obtained by flipping the steps of $Q$ below the $x$-axis. Note that $h_a(\abs[Q])=|h_a(Q)|$ for all $a$. 
For any nonnegative integer $a$, let $\R^{\le a}$ be the set of steps in $\R$ to the left of $x=a$. 
Define $\chi^{\le a}$ similarly.

\begin{lemma}\label{lem:QQ'}
The transformation $Q\mapsto Q'$ in step~1 of the description of $\bij$ is a bijection between paths $Q\in\A_n$ with $h(Q)\ge0$ having $r$ lower returns, and paths $Q'\in\P_n$ with $h(Q')\ge 2r$.
Additionally, \begin{equation}\label{eq:QQ'}h_a(Q')=h_a(\abs[Q])+2|\R^{\le a}|\end{equation} for all $a$.
\end{lemma}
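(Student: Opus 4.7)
The plan is to first prove the height identity, then use it to verify that the image lies in the target set, and finally exhibit an explicit inverse map.

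First I would prove $h_a(Q')=h_a(\abs[Q])+2|\R^{\le a}|$ by induction on $a$. The base case $a=0$ gives $0=0+0$. For the inductive step, I would analyze the $(a+1)$-th step of $Q$. If this step is neither a lower return nor ends strictly below the $x$-axis, it has the same type in $Q$, $Q'$, and $\abs[Q]$, so all three paths undergo the same $\pm 1$ change in height and $|\R^{\le a}|$ is unchanged. If the step ends strictly below (so $h_{a+1}(Q)<0$), it is flipped in going from $Q$ to $Q'$ and also reflected in going from $Q$ to $\abs[Q]$; both operations negate the step's increment, so $h(Q')$ and $h(\abs[Q])$ change by the same amount, and $|\R^{\le a}|$ is still unchanged. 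The critical case is the lower return ($h_a(Q)=-1$, $h_{a+1}(Q)=0$): this $U$ is not flipped in $Q'$, so $h(Q')$ increases by $1$; but in $\abs[Q]$ it is reflected to a $D$ from height $1$ to $0$, so $h(\abs[Q])$ decreases by $1$; and $|\R^{\le a+1}|=|\R^{\le a}|+1$, so the right-hand side changes by $-1+2=+1$, matching.

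The identity immediately yields $h_a(Q')\ge 0$ for all $a$ (since $\abs[Q]\ge 0$), so $Q'\in\P_n$; and setting $a=n$, the hypothesis $h(Q)\ge 0$ gives $h(\abs[Q])=h(Q)$, so $h(Q')=h(Q)+2r\ge 2r$, confirming that the image lies in the correct target set. For bijectivity, I would exhibit an explicit inverse using the standard parenthesis matching of $Q'$. Since $Q'\in\P_n$, all its $D$ steps are matched, and the unmatched steps are all $U$s; there are $h(Q')\ge 2r$ of them. Let $u_1<u_2<\dots<u_{2r}$ denote the positions of the $2r$ leftmost unmatched $U$s of $Q'$, and define $Q$ by flipping the steps of $Q'$ at positions in $\bigcup_{k=1}^r\{u_{2k-1},u_{2k-1}+1,\dots,u_{2k}-1\}$.

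The hard part will be verifying that this is truly the inverse of $\bij$. For the composition $Q\mapsto Q'\mapsto Q$, I would exploit the structural decomposition of $Q'$ induced by the forward map: inside the $k$-th below-axis excursion of $Q$, the interior steps form a Dyck bridge at height $2k-1$ in $Q'$, whose $U$s and $D$s all match among themselves; between (and after) the excursions, $Q$ agrees with $\abs[Q]$ and contributes either a Dyck path (matched internally) or, for the final segment, a Dyck path prefix contributing the remaining $h(Q)$ unmatched $U$s. Hence the $2r$ leftmost unmatched $U$s of $Q'$ are precisely the $2r$ excursion-boundary steps (the flipped starting $D$ and the lower return of each below-axis excursion), appearing in the order $\mathrm{start}_1,\mathrm{end}_1,\mathrm{start}_2,\mathrm{end}_2,\dots$, and flipping back the indicated ranges recovers $Q$. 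For the composition $Q'\mapsto Q\mapsto Q'$, the key observation is that if $u_{2k-1}$ and $u_{2k}$ are consecutive unmatched $U$s in $Q'$, a standard stack argument shows that the strict sub-path between them forms a Dyck bridge; this ensures that in the reconstructed $Q$, positions $[u_{2k-1},u_{2k}]$ form a genuine below-axis excursion with its lower return at $u_{2k}$, and no additional lower returns are introduced.
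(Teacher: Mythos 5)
Your proof is correct and follows essentially the same route as the paper's: establish the height identity by comparing $Q'$ with $\abs[Q]$ step by step, then invert the map by locating and re-flipping the $r$ flipped fragments. Your characterization of those fragments via the $2r$ leftmost unmatched $U$ steps of $Q'$ is equivalent to the paper's description via the rightmost points of $Q'$ at heights $2l$ and $2l+1$, since the $j$-th unmatched $U$ of a nonnegative path is precisely the step leaving its rightmost point at height $j-1$.
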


\begin{proof}
The path $Q'$ is obtained by flipping all the steps of $Q$ below the $x$-axis except for those in $\R$.
Thus, $Q'$ and $\abs[Q]$ differ precisely in the steps in $\R$, which are $U$ steps in $Q'$.
Equation~\ref{eq:QQ'} follows, and in particular $h(Q')=h(\abs[Q])+2r=h(Q)+2r$.

Now we show that this map is a bijection. If the steps in $\R$ are $u_0,u_1,\dots,u_{r-1}$ from left to right, then step $u_l$ becomes the rightmost $U$ step of $Q'$ rising from height $2l+1$ to height $2l+2$.
The transformation $Q\mapsto Q'$ flips, for each $0\le l< r$, the fragment of $Q$ between $u_l$ and the previous point at height $0$ (not including step $u_l$ itself), and this fragment becomes the piece of $Q'$ between the last point at height $2l$ and the last point at height $2l+1$. Thus, one recovers $Q$ from $Q'$ by flipping these fragments again. 
\end{proof}

\begin{lemma}\label{lem:image}
$\bij(\M_{n,i;j})\subseteq \PP_{n,i;j}$.
%If $(P,Q)\in \M_{n,i;j}$ and  $(\tP,\tQ)=\bij(P,Q)$, then $(\tP,\tQ)\in\PP_{n,i;j}$.
\end{lemma}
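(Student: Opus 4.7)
The plan is to verify the four defining properties of $\PP_{n,i;j}$ for $(\tP,\tQ)=\bij(P,Q)$, namely $\tP\ge 0$, $\tQ\ge 0$, $\tQ\le\tP$, and $i-j\le h(\tQ)\le i+j\le h(\tP)$. Two consequences of $(P,Q)\in\M_{n,i;j}$ simplify matters at the outset: the relations $-P\le Q\le P$ force $P\ge\abs[Q]\ge 0$, so $P$ is already a Dyck path prefix, and Lemma~\ref{lem:QQ'} gives $Q'\ge 0$ with $h_a(Q')=h_a(\abs[Q])+2|\R^{\le a}|$ for every $a$. From these, $\tP\ge 0$ is immediate from $h_a(\tP)=h_a(P)+2|\chi^{\le a}|\ge h_a(P)\ge 0$, and $\tP\ge\tQ$ holds because $(\tP-\tQ)/2$ is obtained from $(P-Q')/2$ by flipping each of its unmatched $D$ steps into a $U$ step, leaving only matched $D$ steps together with $U$ and $H$ steps, which forces the resulting path to be non-negative.

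The substantive part of the proof is $\tQ\ge 0$, which I plan to reduce to the pointwise estimate
$$|\chi^{\le a}|\le|\R^{\le a}| \qquad\text{for all } 0\le a\le n.$$
Granting this, Lemma~\ref{lem:QQ'} yields
$$h_a(\tQ)=h_a(Q')-2|\chi^{\le a}|=h_a(\abs[Q])+2\bigl(|\R^{\le a}|-|\chi^{\le a}|\bigr)\ge 0.$$
To prove the estimate, fix $a$, set $k:=|\chi^{\le a}|$, and assume $k\ge 1$ (the case $k=0$ is trivial); let $l_k$ be the largest element of $\chi^{\le a}$. Each position in $\chi$ is an unmatched $D$ step of the integer-valued path $(P-Q')/2$, which is to say a position at which the height reaches a strict new running minimum. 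Starting from height $0$, $k$ such strict decreases force $h_{l_k}\bigl((P-Q')/2\bigr)\le -k$. On the other hand, $P\ge\abs[Q]$ combined with Lemma~\ref{lem:QQ'} gives
$$h_{l_k}\!\left(\frac{P-Q'}{2}\right)=\frac{h_{l_k}(P)-h_{l_k}(\abs[Q])}{2}-|\R^{\le l_k}|\ge -|\R^{\le l_k}|,$$
so $|\R^{\le l_k}|\ge k$, and monotonicity gives $|\R^{\le a}|\ge|\R^{\le l_k}|\ge k=|\chi^{\le a}|$. Establishing this pointwise estimate is the main obstacle; once it is in hand, everything else is bookkeeping.

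Finally, writing $r=|\R|$ and $d=|\chi|$, the identities $h(\tP)=(i+j)+2d$ and $h(\tQ)=(i-j)+2(r-d)$ give $h(\tP)\ge i+j$ at once and, via the $a=n$ case of the estimate (namely $r\ge d$), $h(\tQ)\ge i-j$. For the upper bound $h(\tQ)\le i+j$, I will use that the ending height of $(P-Q')/2$ equals both $(h(P)-h(Q'))/2=j-r$ and $u-d$, where $u$ is the number of its unmatched $U$ steps; non-negativity of $u$ then forces $r-d\le j$, so $h(\tQ)\le(i-j)+2j=i+j$, completing the verification.
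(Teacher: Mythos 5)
Your proof is correct and follows essentially the same route as the paper's: both hinge on the estimate $|\chi^{\le a}|\le|\R^{\le a}|$ derived from $\abs[Q]\le P$ together with Lemma~\ref{lem:QQ'}, and both obtain the endpoint bounds from $h(\tP)=i+j+2|\chi|$ and $h(\tQ)=i-j+2(r-|\chi|)$. The only differences are cosmetic (you certify the key inequality at the last new-minimum position rather than via the running-maximum identity, and you bound $r-|\chi|\le j$ by counting unmatched $U$ steps instead of comparing the ending and maximum heights of $(Q'-P)/2$).
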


\begin{proof}
Let $(P,Q)\in \M_{n,i;j}$ and let $(\tP,\tQ)=\bij(P,Q)$. We will prove that $(\tP,\tQ)\in\PP_{n,i;j}$ by showing that 
$\tP\ge\tQ\ge0$ and that $h(\tP)$ and $h(\tQ)$ satisfy the required inequalities.

The fact that $\tP\ge\tQ$ is clear from step 2 in the description of $\bij$. Indeed, 
since the path $(\tP-\tQ)/2$ is obtained by turning all the unmatched $D$s of $(P-Q')/2$ into $U$s,
we have that $(\tP-\tQ)/2\ge0$, and so $\tP\ge\tQ$.

Next we show that $\tQ\ge0$. It follows from Equation~\eqref{eq:QQ'} that $h(Q')=h(Q)+2r=i-j+2r$ and that $Q'\ge0$.
Since $-P\le Q\le P$, or equivalently $\abs[Q]\le P$, it also follows that $h_a(Q')-h_a(P)\le 2|\R^{\le a}|$. The right hand side of this inequality is weakly increasing in $a$, and so
\begin{equation}\label{eq:chiR}
2|\chi^{\le a}|=\max_{0\le b\le a}\{h_b(Q')-h_b(P)\} \le 2|\R^{\le a}|,
\end{equation}
where the left equality is a consequence of the definition of $\chi$.

Using the definition of $\tQ$ and Equations~\eqref{eq:chiR} and~\eqref{eq:QQ'}, in this order, we obtain
$$h_a(\tQ)=h_a(Q')-2|\chi^{\le a}|\ge h_a(Q')-2|\R^{\le a}|\ge0$$
for every $a$, and so $\tQ\ge0$.

\smallskip

Next we show that the ending heights of $\tP$ and $\tQ$ are in the required intervals.
Equation~\eqref{eq:chiR} for $a=n$ implies that $|\chi|\le r$. Noting that the ending height of the path $(Q'-P)/2$ is 
$$\frac{h(Q')-h(P)}{2}=\frac{i-j+2r-(i+j)}{2}=r-j$$
and that its maximum height is $|\chi|$, we obtain
\begin{equation}\label{eq:chi}
r-j\le |\chi| \le r.
\end{equation}

By construction of $\tP$ and $\tQ$, we have that $$h(\tP)=h(P)+2|\chi|=i+j+2|\chi|\ge i+j$$ and that $$h(\tQ)=h(Q')-2|\chi|=i-j+2(r-|\chi|),$$
and so $i-j\le h(\tQ) \le i+j$ by Equation~\eqref{eq:chi}.
\end{proof}

\begin{lemma}\label{lem:injective}
The map $\bij:\M_{n,i;j}\to\PP_{n,i;j}$ is injective.
\end{lemma}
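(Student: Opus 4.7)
The plan is to exhibit an explicit inverse map, recovering $(P,Q)$ uniquely from $(\tP,\tQ)$. First I would read off numerical invariants from the ending heights. From the construction of $\bij$ we have $h(\tP)=h(P)+2|\chi|=i+j+2|\chi|$ and $h(\tQ)=h(Q')-2|\chi|=(i-j)+2r-2|\chi|$, where $r$ is the number of lower returns of $Q$. Since $i$ and $j$ are fixed parameters of $\PP_{n,i;j}$, these two equations determine $|\chi|=(h(\tP)-i-j)/2$ and then $r=(h(\tQ)-i+j)/2+|\chi|$ from the pair $(\tP,\tQ)$ alone.

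The crucial step is to show that the set $\chi$ of flipped positions is recoverable from $(\tP,\tQ)$: I claim that $\chi$ consists of the positions of the $|\chi|$ leftmost unmatched $U$ steps of the disagreement path $(\tP-\tQ)/2$. To justify this, I would analyze the effect on the matching of passing from $(P-Q')/2$ to $(\tP-\tQ)/2$, which by construction just flips the unmatched $D$ steps of $(P-Q')/2$ into $U$ steps. Flipping only unmatched steps leaves every matched $U$-$D$ pair intact (no matched step is touched), and introducing more $U$ steps without adding any $D$ steps cannot create any new matching. Consequently every step that was unmatched in $(P-Q')/2$ remains unmatched in $(\tP-\tQ)/2$, and conversely no new unmatched step appears. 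Since unmatched $D$ steps always lie strictly to the left of unmatched $U$ steps in any path, the positions in $\chi$ are precisely the $|\chi|$ leftmost of the unmatched $U$ steps of $(\tP-\tQ)/2$, exactly as claimed.

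Once $\chi$ has been identified using only information visible in $(\tP,\tQ)$, flipping its positions in $\tP$ and $\tQ$ recovers $P$ and $Q'$ uniquely, and then Lemma~\ref{lem:QQ'} gives a unique preimage $Q$ of $Q'$ once $r$ is known (via the inverse transformation that flips, for each $0\le l<r$, the fragment of $Q'$ between its last visits to heights $2l$ and $2l+1$). This chain of reconstructions proves that $\bij(P_1,Q_1)=\bij(P_2,Q_2)$ forces $(P_1,Q_1)=(P_2,Q_2)$. The main obstacle is the matching-preservation argument in the middle step; I expect the numerical bookkeeping and the invocation of Lemma~\ref{lem:QQ'} at the end to be essentially automatic.
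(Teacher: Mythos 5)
Your proposal is correct and follows essentially the same route as the paper: recover $|\chi|=(h(\tP)-i-j)/2$ from the ending height, identify $\chi$ as the leftmost $|\chi|$ unmatched $U$ steps of $(\tP-\tQ)/2$, flip back to obtain $(P,Q')$, and then invert $Q\mapsto Q'$ via Lemma~\ref{lem:QQ'}. The only difference is that you spell out the matching-preservation argument (that flipping unmatched $D$s into $U$s leaves the matching unchanged), which the paper states more tersely by appealing to the analogous observation made for $\xi$ in the introduction.
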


\begin{proof}
We show that the transformations $Q\mapsto Q'$ and $(P,Q')\mapsto(\tP,\tQ)$ in the definition of $\bij$ are invertible. Note that the values $i$ and $j$ are fixed, so we can use the knowledge of $i+j$ and $i-j$ when inverting these transformations.

Given $(\tP,\tQ)\in\PP_{n,i;j}$, in order to recover $(P,Q')$ it is enough to determine the set $\chi$ of positions of the steps that have been flipped.
Recall that the paths $\tP$ and $\tQ$ differ from $P$ and $Q'$, respectively, exactly in the positions of the unmatched $D$ steps of $(P-Q')/2$. In these positions, $P$ and $\tQ$ have $D$ steps, whereas $\tP$ and $Q'$ have $U$ steps.
Flipping the unmatched $D$ steps of $(P-Q')/2$ turns these steps into the leftmost $|\chi|$ unmatched $U$ steps of $(\tP-\tQ)/2$.

Note that $|\chi|$ can be easily determined from $\tP$, since $h(\tP)=i+j+2|\chi|$, so $|\chi|=(h(\tP)-i-j)/2$. Thus, the 
leftmost $(h(\tP)-i-j)/2$ unmatched $U$s of $(\tP-\tQ)/2$ determine $\chi$, and flipping the corresponding steps in $\tP$ and $\tQ$ we recover $P$ and $Q'$, respectively.

The fact that the transformation $Q\mapsto Q'$ is invertible follows from Lemma~\ref{lem:QQ'}, noticing that the value $r=|\R|$ can be obtained from $Q'$ using that $h(Q')=i-j+2r$, since $i-j$ is known.
\end{proof}

The proof of Lemma~\ref{lem:injective} yields a description of the inverse map $\bij^{-1}$. Given $(\tP,\tQ)\in\bij(\M_{n,i;j})\subseteq \PP_{n,i;j}$, its preimage $(P,Q)=\bij^{-1}(\tP,\tQ)$ can be obtained as follows:

\begin{enumerate}
\item Let $\chi$ be the set of positions of the leftmost $(h(\tP)-i-j)/2$ unmatched $U$ steps of $(\tP-\tQ)/2$.
Let $P$ and $Q'$ be the paths obtained by flipping the steps in $\chi$ of $\tP$ and $\tQ$, respectively.

\item Let $r=(h(Q')-i+j)/2$. For $0\le l< r$, let $Q'_l$ the fragment of $Q'$ between the rightmost point at height $2l$ and the rightmost point at height $2l+1$. Let $Q$ be the path obtained from $Q'$ by flipping the steps in each $Q'_l$.
\end{enumerate}

\begin{lemma}\label{lem:surjective}
The map $\bij:\M_{n,i;j}\to\PP_{n,i;j}$ is surjective.
\end{lemma}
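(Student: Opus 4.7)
The plan is to apply the explicit inverse recipe described after Lemma~\ref{lem:injective} and verify that its output lies in $\M_{n,i;j}$ and maps back to $(\tP,\tQ)$ under $\bij$. Given $(\tP,\tQ) \in \PP_{n,i;j}$, set $k = (h(\tP) - i - j)/2$, produce $(P,Q')$ by flipping the leftmost $k$ unmatched $U$ steps of $(\tP-\tQ)/2$, set $r = (h(Q') - i + j)/2$, and produce $Q$ from $Q'$ via the inverse of Lemma~\ref{lem:QQ'}. First I would check well-definedness: the inequalities $i - j \le h(\tQ) \le i + j \le h(\tP)$ and $i \ge j$ force $k, r$ to be non-negative integers, ensure that $(\tP-\tQ)/2$ has at least $k$ unmatched $U$ steps, give $Q' \ge \tQ \ge 0$ (each flip replaces a $D$ of $\tQ$ by a $U$), and yield $h(Q') \ge 2r$. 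Lemma~\ref{lem:QQ'} then produces a unique $Q \in \A_n$ with $h(Q) = i-j$ and set of lower returns $\R$ of size $r$.

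The core task is to verify $|Q| \le P$, which by Equation~\eqref{eq:QQ'} amounts to $|\R^{\le a}| \ge (h_a(Q') - h_a(P))/2$ for all $a$. I would establish this in two steps that route through $|\chi^{\le a}|$. First, a combinatorial identification: the unmatched $D$ steps of $(P-Q')/2$ are exactly the positions $\chi = \{u_1, \ldots, u_k\}$, where $u_l$ denotes the $l$-th unmatched $U$ step of $(\tP-\tQ)/2$. This follows from the standard fact that $h_{u_l}((\tP-\tQ)/2) = l$ and $h_a((\tP-\tQ)/2) \ge l$ for $a \ge u_l$ (because no $D$ step can remove an unmatched $U$ from the parenthesis-matching stack), so after the flip each $u_l$ becomes a strict new prefix minimum of $(P-Q')/2$ at height $-l$, with no other new minima. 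Given this, the identity $2|\chi^{\le a}| = \max_{b \le a}(h_b(Q') - h_b(P))$ from the proof of Lemma~\ref{lem:image} applies and yields $|\chi^{\le a}| \ge (h_a(Q') - h_a(P))/2$. Second, the inequality $|\R^{\le a}| \ge |\chi^{\le a}|$: from $h_{u_l}((\tP-\tQ)/2) = l$ one computes $h_{u_l}(Q') = h_{u_l}(\tP) \ge 2l$, and the analogous floor argument applied to $Q'$ (using $\tQ \ge 0$ and $|\chi^{\le a}| \ge l$ for $a \ge u_l$) shows $h_a(Q') \ge 2l$ throughout $[u_l, n]$; hence $Q'$ cannot ascend from any height $2l''+1 \le 2l-1$ to $2l''+2$ anywhere in $(u_l, n]$, which forces the $l$ rightmost-ascent positions $\rho_0, \ldots, \rho_{l-1}$ comprising the first $l$ elements of $\R$ to lie at positions $\le u_l$. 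This gives $|\R^{\le u_l}| \ge l = |\chi^{\le u_l}|$, and monotonicity extends the inequality to all $a$.

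Once $(P,Q) \in \M_{n,i;j}$ is established, the identity $\bij(P,Q) = (\tP,\tQ)$ is automatic: Step~1 of $\bij$ applied to $Q$ recovers $Q'$ by Lemma~\ref{lem:QQ'} (since $\R$ is exactly the set of lower returns of $Q$ by construction), and Step~2 flips the unmatched $D$ steps of $(P-Q')/2$, which by the combinatorial identification above coincide with $\chi$, returning $(\tP,\tQ)$. I expect the main obstacle to be the second step of the core task --- the inequality $|\R^{\le a}| \ge |\chi^{\le a}|$ --- since it requires combining the ``rightmost'' selection rule built into Lemma~\ref{lem:QQ'} with a delicate floor estimate on $Q'$, hinging on the identity $h_{u_l}(Q') = h_{u_l}(\tP)$ that ties together the three pairs of paths produced by the construction.
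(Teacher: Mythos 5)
Your proof is correct and takes essentially the same route as the paper's: you apply the explicit inverse recipe and reduce everything to the chain $h_a(Q')-h_a(P)\le 2|\chi^{\le a}|\le 2|\R^{\le a}|$, where the second inequality is obtained from the lower bound $h_b(Q')\ge 2|\chi^{\le a}|$ for $b\ge a$ combined with the rightmost-selection rule defining $\R$ --- precisely the content of the paper's inequalities~\eqref{eq:min} and~\eqref{eq:min2}, just evaluated at the positions $u_l$ instead of via $\min_{a\le b\le n}h_b(Q')$. Your explicit identification of the unmatched $D$ steps of $(P-Q')/2$ with $\chi$, used to confirm $\bij(P,Q)=(\tP,\tQ)$, is a detail the paper leaves implicit (deferring to the invertibility argument of Lemma~\ref{lem:injective}) and is a welcome addition.
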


\begin{proof}
We will show that when the above construction for $\bij^{-1}$ is applied to an arbitrary pair $(\tP,\tQ)\in\PP_{n,i;j}$, it produces a pair $(P,Q)\in\M_{n,i;j}$ such that $\bij(P,Q)=(\tP,\tQ)$.

From step~1 in the description of $\bij^{-1}$, we see that $$h(P)=h(\tP)-2|\chi|=i+j,$$ and that $h(Q')=h(\tQ)+2|\chi|\ge h(\tQ)\ge i-j$, which implies that the value of $r$ defined in step~2 is nonnegative. Additionally, since $i\ge j$, we have that $2r\le h(Q')$, and thus the pieces $Q'_l$ for $0\le l<r$ are non-empty. After they are flipped to build $Q$, we get $$h(Q)=h(Q')-2r=i-j$$ as desired.

It remains to show that the paths $P$ and $Q$ produced by $\bij^{-1}$ satisfy $-P\le Q\le P$.
By construction of $Q'$, we have $h_a(Q')=h_a(\tQ)+2|\chi^{\le a}|\ge 2|\chi^{\le a}|$ for all $a$. Since $|\chi^{\le a}|$ is increasing in $a$, it follows that
\begin{equation}\label{eq:min}\min_{a\le b\le n} h_b(Q')\ge 2|\chi^{\le a}|.\end{equation}
By Lemma~\ref{lem:QQ'}, the transformation $Q\mapsto Q'$ is a bijection whose inverse is given by step~2 of the description of $\bij^{-1}$.
Let $\R$ be the set whose elements are, for each $0\le l<r$, the rightmost $U$ step of $Q'$ rising from height $2l+1$ to height $2l+2$. The proof of Lemma~\ref{lem:QQ'} shows that the steps $\R$ become the lower returns of $Q$. Defining $\R^{\le a}$ accordingly, it is clear from the definition that
\begin{equation}\label{eq:min2}2|\R^{\le a}|\ge \min_{a\le b\le n} h_b(Q').\end{equation}

By construction of $P$ and $Q'$ in step~1 of the description of $\bij^{-1}$, we have that $h_a(Q')-h_a(P)\le 2|\chi^{\le a}|$, which,
in combination with inequalities~\eqref{eq:min} and~\eqref{eq:min2}, implies that $h_a(Q')-h_a(P)\le 2|\R^{\le a}|$. 
Now we use equation~\eqref{eq:QQ'} to conclude that
$$h_a(\abs[Q])=h_a(Q')-2|\R^{\le a}|\le h_a(P)$$
for all $a$. This proves that $\abs[Q]\le P$, or equivalently, $-P\le Q\le P$.
\end{proof}

\section{Open problems}\label{sec:open}
The main contribution of this article is a bijection between $\PP_n$ and $\GG_n$, which extends the known bijections between $\P_n$ and $\G_n$.
Generalizing our bijection to $k$-tuples of paths remains an open problem.

\begin{problem}
Find an explicit bijection between $\P^{(k)}_n$ and $\G^{(k)}_n$ for $k\ge3$. 
\end{problem}

Finding an extension of Haiman's bijection mentioned in Section~\ref{sec:related} to plane partitions would give a bijection between $\P^{(k)}_n$ and $\G^{(k)}_n$, although it would not be as direct as the bijection in Corollary~\ref{cor:PG}, which can be described easily at the level of paths.

Our final open question was formulated by Bousquet-M\'elou and Mishna in~\cite{BMM}:

\begin{problem}\label{problemBMM}
Find a direct (and involution-free) bijection between walks in $\O_{2m}$ ending on the $x$-axis and those ending on the diagonal.
\end{problem}

Whereas Corollary~\ref{cor:diagonal} proves Equation~\eqref{eq:Odiag} bijectively, it does not completely solve the above problem, partly because Gouyou-Beauchamps's proof of Equation~\eqref{eq:Oxaxis} involves negative signs and cancellations. Note also that our bijection $\bijw$ restricts to the identity on paths in the octant that end on the $x$-axis.
In terms of nested paths, Problem~\ref{problemBMM} translates into finding a direct bijection between
those pairs $(P,Q)\in\PP_{2m}$ with $h(P)=h(Q)$ and those with $h(Q)=0$. This question is reminiscent of the 
symmetry between top and bottom contacts of lattice paths between two fixed boundaries discussed in~\cite{EliRub}.

\subsection*{Acknowledgments}
The author thanks Mireille Bouquet-M\'elou, Ira Gessel, Mark Haiman, Christian Krattenthaler and Richard Stanley for useful discussions and for providing many relevant references.

\end{document}